\title{Equidistant Codes in the Grassmannian}
\author{Tuvi Etzion \and Netanel Raviv}
\date{\today}

\documentclass[12pt]{article}

\usepackage{filecontents}

\usepackage[margin=0.7in]{geometry}

\usepackage{amssymb}
\usepackage{amsmath}
\usepackage{amsthm}
\usepackage{units}

\newtheorem{theorem}{Theorem}
\newtheorem{definition}{Definition}

\newtheorem{lemma}{Lemma}
\newtheorem{corollary}{Corollary}
\newtheorem{conjecture}{Conjecture}

\numberwithin{subcase}{case}

\newtheorem{remark}{Remark}


\newcommand{\cl}[1]{\mathcal{#1}}

\newcommand{\Fq}{\mathbb{F}_{q}}

\newcommand{\ga}{\alpha}

\newcommand{\sus}{\subseteq}

\newcommand{\bC}{\bf C}
\newcommand{\C}{\mathbb{C}}

\newcommand{\T}{\mathbb{T}}

\newcommand{\bF}{\mathbb{F}}
\newcommand{\F}{\mathbb{F}}

\newcommand{\bD}{\mathbb{D}}

\newcommand{\bS}{\mathbb{S}}

\newcommand{\bP}{\mathbb{P}}
\newcommand{\bM}{\mathbb{M}}
\newcommand{\cE}{\mathcal{E}}

\newcommand{\cS}{\mathcal{S}}
\newcommand{\cA}{\mathcal{A}}
\newcommand{\cC}{\mathcal{C}}

\newcommand{\cP}{\mathcal{P}}
\newcommand{\cB}{\mathcal{B}}
\newcommand{\qbin}[3]{{#1 \brack #2}_{#3}}
\newcommand{\grsmn}[3]{\cl{G}_{#1}\left(#2,#3\right)}
\newcommand{\Span}[1]{{\left\langle {#1} \right\rangle}}

\makeatletter
\renewcommand*\env@matrix[1][*\c@MaxMatrixCols c]{%
  \hskip -\arraycolsep
  \let\@ifnextchar\new@ifnextchar
  \array{#1}}
\makeatother

\newcommand{\cG}{{\cal G}}

\makeatletter
 \DeclareRobustCommand{\nsbinom}{\genfrac[]\z@{}}
 \makeatother
 
 \newcommand{\sbinomq}[2]{\nsbinom{{#1}}{{#2}}_{q}}
  \newcommand{\sbinomtwo}[2]{\nsbinom{{#1}}{{#2}}_{2}}

\begin{document}
\maketitle

\begin{abstract}
Equidistant codes over vector spaces are considered.
For $k$-dimensional subspaces over a large vector space
the largest code is always a sunflower. We present several
simple constructions for such codes which might produce
the largest non-sunflower codes. A novel construction,
based on the Pl\"{u}cker embedding, for 1-intersecting codes
of $k$-dimensional subspaces over $\F_q^n$, $n \geq \binom{k+1}{2}$,
where the code size is $\frac{q^{k+1}-1}{q-1}$ is presented.
Finally, we present a related construction which generates equidistant
constant rank codes with matrices of size $n \times \binom{n}{2}$
over $\F_q$, rank $n-1$, and rank distance $n-1$.
\end{abstract}

\noindent {\bf Keywords:} Constant rank codes,
equidistant codes, Grassmannian, Pl\"{u}cker embedding, sunflower.

\footnotetext[1]{This research was supported in part by the Israeli
Science Foundation (ISF), Jerusalem, Israel, under
Grant 10/12.

The work of Netanel Raviv is part of his Ph.D.
thesis performed at the Technion.

The authors are with the Department of Computer Science, Technion,
Haifa 32000, Israel.

e-mail: etzion@cs.technion.ac.il, netanel.raviv@gmail.com .}

\section{Introduction}

Equidistant codes in the Hamming scheme are the bridge between coding
theory and extremal combinatorics. A code is called equidistant if the
distance between any two distinct codeword is equal to a given parameter $d$.
A code which is represented as collection of subsets
will be called $t$-\emph{intersecting}
if the intersection between any two codewords is exactly $t$.
A binary code of length $n$ in the Hamming scheme is a collection of
binary words of length $n$. A binary code of length $n$ with $m$ codewords
can be represented as a binary $m \times n$ matrix whose rows are
the codewords of the code. The Hamming distance between two codewords
is the number of positions in which they differ. The \emph{weight}
of a word in the number of nonzero entries in the word. A code $\bC$ is called
\emph{a constant weight code} if all its codewords have the same weight.
The \emph{minimum Hamming distance} of the code is the smallest
distance between two distinct codewords. An $(n,d,w)$ code is
a constant weight code of length $n$, weight $w$,
and minimum Hamming distance $d$.
Optimal $t$-intersecting equidistant codes (which must be also constant weight)
have two very interesting families of codes, codes obtained from projective
planes and codes obtained from Hadamard matrices~\cite{Dez73}.

Recently, there have been lot of new interest in codes whose codewords
are vector subspaces of a given vector space over $\F_q$, where $\F_q$
is the finite field with $q$ elements. The interest in these codes
is a consequence of their application in random network coding~\cite{KoKs08}.
These codes are related to what
are known as $q$-analogs. The well known concept of $q$-analogs replaces subsets by
subspaces of a vector space over a finite field and their sizes by
the dimensions of the related subspaces. A binary codeword can
be represented by a subset whose elements are the
nonzero positions. In this respect, \emph{constant dimension
codes} are the $q$-analog of constant weight codes.
For a positive integer $n$, the set of all subspaces of~$\F_q^n$
is called the \emph{projective space}~$\cP_q(n)$.
The set of all $k$-dimensional subspaces ($k$-subspaces in short)
of~$\F_q^n$ is called a \emph{Grassmannian} and is denoted by~$\cG_q(n,k)$.
The size of $\cG_q(n,k)$ is given by the $q$-binomial
coefficient $\sbinomq{n}{k}$, i.e.
$$
| \cG_q (n,k) | = \sbinomq{n}{k} = \frac{(q^n-1)(q^{n-1}-1) \cdots (q^{n-k+1}-1)}{(q^k-1)(q^{k-1}-1) \cdots (q-1)}~.
$$
Similarly, the
set of all $k$-subspaces of a subspace $V$ is denoted by $\qbin{V}{k}{q}$.
It turns out that the natural measure of distance in $\cP_q(n)$ is given by
$$
d_S (X,Y)  = \dim X+ \dim Y -2 \dim\bigl( X\, {\cap} Y\bigr)~,
$$
for all $X,Y \in \cP_q(n)$. This measure of distance is called the
\emph{subspace distance} and it is the $q$-analog of the
Hamming distance in the Hamming space. Finally,
an $[n,d,k]_q$ code is a subset of $\cG_q(n,k)$ (constant dimension code), having
minimum subspace distance $d$, where the minimum subspace
distance of the code is the smallest subspace distance
between any two codewords. In the sequel, when distance
will be mentioned it will be understood from the context if the
Hamming distance or the subspace distance is used.

The main goal of this paper is to consider the
$t$-intersecting constant dimension codes (which are clearly
equidistant). In this context interesting constructions
of such codes were given
in~\cite{Han12,han13}, but their size is rather small.

Some optimal binary equidistant codes form
a known structure from extremal combinatorics
called a partial projective plane.
This structure was defined and studied in~\cite{Hall77-PartialPG}.
An important concept in this context is the sunflower.
A binary constant weight code of weight $w$ is
called a \emph{sunflower} if any two codewords
intersect in the same $t$ coordinates.
A \emph{sunflower} $\bS \subset \cG_q(n,k)$ is a $t$-intersecting
equidistant code in which any two codewords $X, Y \in \bS$ intersect in
the same $t$-subspace $Z$. The $t$-subspace $Z$ is called the \emph{center}
of $\bS$ and is denoted by $Cen(\bS)$.

An upper bound on the size of $t$-intersecting binary constant weight code
with weight $w$ was given in~\cite{Dez73,DeFr81}. If the size of
such code is greater than $(w-t)^2 + (w-t) +1$ then the code
is a sunflower. This bound is attained when $t=1$, $w=q+1$,
where $q$ is a prime power and the codewords are the characteristic
vectors of length $\sbinomq{n}{1}$ which represent
the lines of the projective plane of order~$q$. Except for some
specific cases~\cite{HJKvL77-Distance12} no better bound is known. This bound
can be adapted for $t$-intersecting constant dimension codes
with dimension $k$ over $\F_q$. We can view the characteristic
binary vectors with weight $\frac{q^k-1}{q-1}$ as the codewords,
which implies intersection of size $\frac{q^t-1}{q-1}$ between codewords
and hence we have the following bound.

\begin{theorem}
\label{thm:sunflower_bound}
If a $t$-intersecting constant dimension code of dimension $k$
has more than $\left( \frac{q^k-q^t}{q-1} \right)^2 + \frac{q^k-q^t}{q-1} +1$
codewords, then the code is a sunflower.
\end{theorem}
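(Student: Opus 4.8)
The plan is to reduce the statement to the bound for binary constant weight codes quoted above, by passing to the characteristic-vector representation over the points of the projective space. First I would set up the embedding: let $N = \sbinomq{n}{1} = \frac{q^n-1}{q-1}$ be the number of one-dimensional subspaces (points) of $\F_q^n$, and assign to each codeword $X \in \cG_q(n,k)$ the characteristic vector $\chi_X \in \zo{N}$ whose support is the set of points contained in $X$. Since a $k$-subspace contains exactly $\frac{q^k-1}{q-1}$ points, every $\chi_X$ has the same weight $w = \frac{q^k-1}{q-1}$, so the image $\{\chi_X\}$ is a binary constant weight code of length $N$ and weight $w$.

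The second step is to verify that the intersection structure is preserved and then to invoke the known bound. For two codewords $X,Y$, the common support of $\chi_X$ and $\chi_Y$ is exactly the set of points lying in both $X$ and $Y$, that is, the point set of $X \cap Y$; by hypothesis $\dim(X \cap Y) = t$, so this common support always has size $\frac{q^t-1}{q-1} =: t'$. Hence $\{\chi_X\}$ is a $t'$-intersecting binary constant weight code, and since $X \mapsto \chi_X$ is injective the two codes have equal size. The bound of~\cite{Dez73,DeFr81} then says that if this size exceeds $(w-t')^2 + (w-t') + 1$, the code $\{\chi_X\}$ is a Hamming sunflower. A one-line computation gives $w - t' = \frac{q^k-1}{q-1} - \frac{q^t-1}{q-1} = \frac{q^k-q^t}{q-1}$, so the threshold matches the quantity in the statement exactly.

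The final step, and the only one requiring genuine care, is to translate a Hamming sunflower back into a sunflower of subspaces. Being a Hamming sunflower provides a fixed set $P$ of $t'$ coordinates with $\mathrm{supp}(\chi_X) \cap \mathrm{supp}(\chi_Y) = P$ for every pair of distinct codewords. Fixing one pair $X_0, Y_0$ identifies $P$ with the point set of the $t$-subspace $Z := X_0 \cap Y_0$; for any other codeword $W$ the equality $\mathrm{supp}(\chi_{X_0}) \cap \mathrm{supp}(\chi_W) = P$ then shows that every point of $Z$ lies in $W$, and since $Z$ is spanned by the points it contains this forces $Z \subseteq W$. Thus $Z$ sits inside every codeword, and as $\dim Z = t = \dim(X \cap Y)$ for all pairs we conclude $X \cap Y = Z$ throughout, so the code is a sunflower with $Cen(\bS) = Z$. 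I expect the main obstacle to be precisely this last identification: one must argue that the common coordinate set of the Hamming sunflower is forced to be the point set of an honest $t$-dimensional subspace rather than an arbitrary collection of $t'$ points, and it is the exact matching of the parameters $w = \frac{q^k-1}{q-1}$ and $t' = \frac{q^t-1}{q-1}$ that guarantees this.
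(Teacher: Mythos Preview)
Your proposal is correct and follows exactly the approach the paper indicates: the paper does not give a formal proof of this theorem but rather a one-sentence sketch just before stating it, namely to view the codewords as characteristic binary vectors of weight $\frac{q^k-1}{q-1}$ with pairwise intersection $\frac{q^t-1}{q-1}$ and then invoke the Deza--Frankl bound. Your write-up is in fact more complete than the paper's, since you spell out the back-translation from a Hamming sunflower to a subspace sunflower, a step the paper leaves implicit.
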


The bound of Theorem~\ref{thm:sunflower_bound} is rather weak compared
to the known lower bounds. The following conjecture
\cite{Shadows}
is attributed to Deza.

\begin{conjecture}
\label{con:largest}
If a $t$-intersecting code in $\cG_q(n,k)$
has more than $\sbinomq{k+1}{1}$ codewords, then the code is a sunflower.
\end{conjecture}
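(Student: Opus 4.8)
The plan is to prove the contrapositive: a $t$-intersecting code $\cF \sus \cG_q(n,k)$ that is \emph{not} a sunflower has at most $\sbinomq{k+1}{1}$ codewords. First I would reduce to the case of trivial kernel. Set $K := \bigcap_{X \in \cF} X$ and $s := \dim K$; since $K \sus X \cap Y$ for every pair of codewords, $s \le t$, and if $s = t$ then each pairwise intersection equals $K$, so $\cF$ is a sunflower with center $K$ — the excluded case. Hence $s \le t-1$. Passing to the quotient $\F_q^n / K$, the images $\bar X := X/K$ form a $(t-s)$-intersecting code of $(k-s)$-subspaces with $\bigcap_X \bar X = 0$, and $X \mapsto \bar X$ is a bijection under which $\cF$ is a sunflower if and only if $\bar\cF$ is. Because $\sbinomq{m+1}{1}$ is increasing in $m$, it therefore suffices to prove $|\cF| \le \sbinomq{k+1}{1}$ under the hypothesis $\bigcap_{X\in\cF} X = 0$, which for $t \ge 1$ already forces $\cF$ to be non-sunflower.

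To bound a zero-kernel code, the natural tool is the Pl\"ucker embedding, matching the paper's extremal construction $X_v = v \wedge \F_q^{k+1}$. I would represent each codeword $X$ by a decomposable element $\omega_X \in \wedge^{k}\F_q^{n}$ and attempt to linearize the condition $\dim(X \cap Y) = t$, aiming for a Fisher/rank-type inequality that confines the associated vectors to a space whose $q$-projective size is $\sbinomq{k+1}{1}$. Equivalently, one can view $\cF$ as a clique in the graph on $\cG_q(n,k)$ joining subspaces at subspace distance $2(k-t)$ and attempt a Delsarte linear-programming or eigenvalue bound in the Grassmann scheme.

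The hard part will be that neither tool suffices on its own, for two linked reasons that form the crux. A sunflower with center $Z$ is nothing but an arbitrary partial spread of $\F_q^n/Z$, hence a very large clique in the distance-$2(k-t)$ graph; so no purely spectral clique bound can yield $\sbinomq{k+1}{1}$, and the zero-kernel hypothesis must be injected to excise precisely these configurations, turning the problem into a \emph{stability} statement about large cliques. Moreover, the target $\sbinomq{k+1}{1} = \tfrac{q^{k+1}-1}{q-1}$ lies far below the $q$-analogue of Deza's bound in Theorem~\ref{thm:sunflower_bound}, which is quadratic in $\tfrac{q^{k}-q^{t}}{q-1}$; thus the conjecture is strictly stronger than a routine $q$-analogue and cannot follow from the characteristic-vector argument. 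I expect the decisive obstacle to be ruling out non-sunflower codes of intermediate size — between $\sbinomq{k+1}{1}$ and the Deza bound — which seems to require identifying the wedge construction $\{\,v \wedge \F_q^{k+1}\,\}$ as the unique extremal object and showing that every near-extremal zero-kernel code must replicate its geometry.
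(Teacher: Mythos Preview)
The fundamental problem is that you are attempting to prove a statement that the paper does \emph{not} prove --- indeed, cannot prove, because the paper itself falsifies it. Conjecture~\ref{con:largest} is presented as an open conjecture attributed to Deza, and in Section~\ref{sec:larger} the authors exhibit an explicit counterexample: a non-sunflower $1$-intersecting code in $\cG_2(6,3)$ of size $16$, whereas $\sbinomtwo{4}{1}=15$. Thus the statement ``if $|\cF|>\sbinomq{k+1}{1}$ then $\cF$ is a sunflower'' is false for $(q,n,k,t)=(2,6,3,1)$, and no proof strategy --- Pl\"{u}cker linearization, Delsarte LP bounds, Fisher-type rank arguments, or otherwise --- can succeed.

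Your outline in fact already contains the seeds of this failure. You correctly observe that a purely spectral clique bound on the distance-$2(k-t)$ graph cannot yield $\sbinomq{k+1}{1}$ because sunflowers are arbitrarily large cliques, and you propose to inject the zero-kernel hypothesis to ``excise precisely these configurations.'' But the counterexample shows that zero-kernel non-sunflower codes of size strictly exceeding $\sbinomq{k+1}{1}$ do exist, so the stability statement you hope to prove is simply false. The step where you ``identify the wedge construction $\{v\wedge\F_q^{k+1}\}$ as the unique extremal object'' is exactly where the argument would have to break down, since the computer-found code of size $16$ is a strictly larger extremal object with a different geometry. The correct response to this conjecture is not a proof but either a counterexample (which the paper supplies) or a repaired statement; the paper's open problems list asks whether the bound might still hold for $t>1$ and $k>t+1$.
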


The term $t$-intersecting code is different from the highly related term
$t$-intersecting family~\cite{EKR61,FrWi86}. A family of $k$-subsets
of a set $X$ is called $t$-intersecting if any two $k$-subsets in the
family intersect in at least $t$ elements~\cite{EKR61}. The celebrated
Erd\"{o}s-Ko-Rado theorem determines the maximum size of such family.
The $q$-analog problem was considered for subspaces in~\cite{Hsi75,FrWi86}.
For the $q$-analog problem, a $t$-intersecting family is is a set of $k$-subspaces
whose pairwise intersection is at least $t$. In~\cite{FrWi86} it was proved that

\begin{theorem}
\label{theorem:FW}
If $\mathbb{I} \subseteq \grsmn{q}{n}{k}$ is a family of $k$-subspaces of $\bF_q^n$ whose pairwise intersection is of dimenstion at least $t$, then
$$
|\mathbb{I}| \le \max \left\{ \sbinomq{n-t}{k-t},\sbinomq{2k-t}{k} \right\}.
$$
\end{theorem}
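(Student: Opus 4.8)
The plan is first to verify that the bound is tight, so that the maximum is unavoidable, and then to establish the matching upper bound by combining a linear-algebraic compression argument with the Hoffman (ratio) bound. For tightness I would exhibit the two extremal families. Fixing a $t$-subspace $T \subseteq \bF_q^n$ and taking $\mathbb{I}$ to be all $k$-subspaces containing $T$ yields a $t$-intersecting family, since any two members contain $T$; passing to the quotient $\bF_q^n/T \cong \bF_q^{n-t}$ identifies these with the $(k-t)$-subspaces of an $(n-t)$-space, so $|\mathbb{I}| = \sbinomq{n-t}{k-t}$. Fixing instead a $(2k-t)$-subspace $W$ and taking $\mathbb{I} = \qbin{W}{k}{q}$ is $t$-intersecting because $X,Y \subseteq W$ forces $\dim(X+Y) \le 2k-t$ and hence $\dim(X \cap Y) \ge 2k-(2k-t) = t$, giving $|\mathbb{I}| = \sbinomq{2k-t}{k}$. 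Thus neither term alone can bound every family, and the right-hand side must be the maximum of the two.

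For the upper bound I would model the constraint as a graph: let $G$ have vertex set $\grsmn{q}{n}{k}$ with $X \sim Y$ exactly when $\dim(X \cap Y) \le t-1$, so that a $t$-intersecting family is precisely an independent set of $G$. The graph $G$ is a union of the relations of the Grassmann association scheme $J_q(n,k)$ corresponding to intersection dimension at most $t-1$, whose eigenvalues have explicit $q$-polynomial ($q$-Eberlein) expressions; summing the relevant ones gives the spectrum of $G$, which is regular of some degree $d$ with least eigenvalue $\lambda_{\min}$. The Hoffman bound then yields $|\mathbb{I}| \le \frac{-\lambda_{\min}}{d-\lambda_{\min}}\,\sbinomq{n}{k}$, and in the large-$n$ regime this simplifies exactly to $\sbinomq{n-t}{k-t}$, matching the point-pencil family and even characterizing the extremal families through the minimizing eigenspace.

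The main obstacle is the small-$n$ regime, where $\sbinomq{2k-t}{k}$ is the dominant term and the bare ratio bound is no longer exact. Here I would preprocess $\mathbb{I}$ by a linear-algebraic shifting (compression) with respect to a fixed complete flag: each shift replaces a chosen direction by an earlier one, preserving both $|\mathbb{I}|$ and the $t$-intersecting property, and iterating terminates at a fully shifted family. A fully shifted family admits a clean induction on $n$ and $k$ via a fixed hyperplane $H$: members lying in $H$ form a $t$-intersecting family in $\grsmn{q}{n-1}{k}$, while the traces on $H$ of the remaining members form a family in $\grsmn{q}{n-1}{k-1}$ whose intersection parameter the shiftedness pins down. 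Bounding each part by the induction hypothesis and checking that the resulting recursion reproduces the maximum of the two candidate quantities for every $n \ge 2k-t$ is exactly the delicate book-keeping step, and carrying the extremal-family analysis through this recursion is where most of the work lies.
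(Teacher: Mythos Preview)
The paper does not prove Theorem~\ref{theorem:FW} at all: it is quoted as a known result of Frankl and Wilson~\cite{FrWi86}, introduced with ``In~\cite{FrWi86} it was proved that\ldots''. So there is no proof in the paper to compare your proposal against; any self-contained argument you supply goes well beyond what the authors do here.

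As for your sketch itself, the large-$n$ part is on the right track and is essentially the strategy of~\cite{FrWi86}: the relevant graph sits inside the $q$-Johnson (Grassmann) scheme, and summing the appropriate $q$-Eberlein eigenvalues together with the ratio (Hoffman) bound yields exactly $\sbinomq{n-t}{k-t}$ once $n$ is large enough. Your tightness constructions are correct and are precisely the two families the paper mentions informally right after the theorem. The weaker part of your plan is the small-$n$ regime: invoking ``linear-algebraic shifting with respect to a flag'' is plausible-sounding but is not a routine adaptation of Frankl's set-system compressions, and you have not specified an operation on $k$-subspaces that simultaneously preserves cardinality, preserves the $t$-intersection condition, and drives the family to a state where the hyperplane recursion you describe actually closes. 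If you intend to carry this through, you should either cite or reconstruct a concrete vector-space shift with those properties, or replace this step by the approach actually used in~\cite{FrWi86}, which handles the full range directly via the scheme's eigenstructure rather than by compression plus induction.
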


Although Theorem~\ref{theorem:FW}
concerns fundamentally different families from the ones discussed in this paper,
some intriguing resemblance is evident. One can easily see that there exist two
simple families of subspaces attaining the bound of Theorem~\ref{theorem:FW}.
The first is the set of all $k$-subspaces sharing at least a fixed $t$-subspace.
The second is the set of all $k$-subspaces contained in some $(2k-t)$-subspace
These elements are closely related to the terms \emph{sunflower} and \emph{ball}
discussed in details in Section~\ref{sec:trivial}.

One concept which is heavily connected to constant dimension codes
is rank-metric codes.
For two $k \times \ell$ matrices $A$ and $B$ over $\F_q$ the
\emph{rank distance} is defined by
$$
d_R (A,B) = \text{rank}(A-B)~.
$$
A $[k \times \ell,\varrho,\delta]$ \emph{rank-metric code} $\cC$
is a linear code, whose codewords are $k \times \ell$ matrices
over $\F_q$; they form a linear subspace with dimension $\varrho$
of $\F_q^{k \times \ell}$, and for each two distinct codewords $A$
and $B$ we have that $d_R (A,B) \geq \delta$.

There is a large literature on rank-metric code and also on the
connections between rank-metric codes and constant dimension
codes, e.g.~\cite{Del78,EtSi09,Gab85,GaYa10,Rot91,SKK08}. Given
a rank-metric code, one can form from it
a constant dimension code, by lifting its matrices~\cite{EtSi09,SKK08}.
Optimal constant dimension codes can be derived from optimal codes
of a subclass of rank-metric codes, namely \emph{constant rank codes}.
In a constant rank code all matrices have the same rank. The connection
between optimal codes in this class and optimal constant dimension codes
was given in~\cite{GaYa10}. This also can motivate a research on equidistant
constant rank codes which we will also discuss in this paper.

The rest of this paper is organized as follows. In Section~\ref{sec:trivial}
we consider trivial codes and trivial constructions.
The trivial equidistant constant dimension codes are the $q$-analogs
of the trivial binary equidistant constant weight codes. It will be very clear that
the trivial $q$-analogs are not so trivial. In fact for most parameters
we don't know the size of the largest trivial codes. We will also define
two operations which are important in constructions of codes. The first
one is the extension of a code (or the extended code of a given code)
preserves the triviality of the code. The second operation is the
orthogonality (or the orthogonal code of a given code) preserves
triviality only in one important set of parameters.
In Section~\ref{sec:construction} we present our main result of
the paper, a construction of 1-intersecting code in $\cG_q(n,k)$,
$n \geq \binom{k+1}{2}$, whose size is $\sbinomq{k+1}{1}$.
The construction is based on the Pl\"{u}cker embedding~\cite{BR98-PgBook}.
We will show one case in which a larger code by one is obtained,
which falsifies Conjecture~\ref{con:largest} in this case.
We also consider the largest
non-sunflower $t$-intersecting codes in $\cG_q(n,k)$ based
on our discussion. Finally, we
consider the size of the largest equidistant code in all
the projective space $\cP_q(n)$.
In Section~\ref{section:eqRMC} we use the Pl\"{u}cker
embedding to form a constant rank code over $\F_q$, with matrices
of size $n\times\binom{n}{2}$, rank $n-1$, rank distance $n-1$,
and $q^n-1$ codewords. The technique of Section~\ref{section:eqRMC}
is used in Section~\ref{sec:recursion} for a recursive construction of
exactly the same codes constructed in Section~\ref{sec:construction}.
Conclusion and open problems for future research
are given in Section~\ref{sec:conclude}.

\section{Trivial Codes and Trivial Constructions}
\label{sec:trivial}

In this section we will consider first trivial codes.
A binary constant weight equidistant code $\bC$ of size~$m$ is called \emph{trivial}
is every column of $\bC$ has $m$ or $m-1$ equal entries. Now, we define
a $q$-analog for a constant dimension equidistant code.
A constant dimension equidistant code $\C \subset \cG_q(n,k)$
will be called \emph{trivial} if one of the following
two conditions holds.
\begin{enumerate}
\item
Each element $x$ of $\F_q^n$ is either contained in all
$k$-subspaces of $\C$, no $k$-subspace of $\C$, or
exactly one $k$-subspace of $\C$.

\item
If $\T$ is the smallest subspace of $\F_q^n$ which contains
all the $k$-subspaces of $\C$ then $\T$ is a $(k+1)$-subspace.
\end{enumerate}

A trivial code which satisfies the first condition
is a \emph{sunflower}. A trivial code
which satisfies the second condition will be called
a \emph{ball}. We will examine now the types of constant dimension
equidistant codes which satisfy one of these
two conditions.
We will discover that in some cases triviality does not mean
that it is easy to find the optimal (largest) trivial code.

\subsection{Partial spreads}
\label{sec:spread}

Two subspaces $X, Y \in \cG_q(n,k)$ are called \emph{disjoint} if
their intersection is the null space, i.e. ${X \cap Y = \{ {\bf 0} \}}$.
A \emph{partial $k$-spread} (or a \emph{partial spread} in short)
in $\cG_q(n,k)$ is a set of disjoint subspaces from
$\cG_q(n,k)$. If $k$ divides $n$  and the partial spread
has $\frac{q^n-1}{q^k-1}$ subspaces then the partial spread
is called a \emph{$k$-spread} (or a \emph{spread} in short).
A partial spread in $\cG_q(n,k)$ is the $q$-analog of
a 0-intersecting constant weight code
of length $n$ and weight $k$. The
number of $k$-subspaces in the largest partial spread of $\cG_q(n,k)$
will be denoted by $E_q[n,k]$.
The known upper and lower bounds on $E_q[n,k]$
are summarized in the following theorems. The first three well-known
theorems can be found in~\cite{EtVa11}.

\begin{theorem}
\label{thm:exactk_2k}
If $k$ divides $n$ then $E_q[n,k] = \frac{q^n-1}{q^k-1}$.
\end{theorem}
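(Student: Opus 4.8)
The plan is to prove the equality by establishing matching upper and lower bounds on $E_q[n,k]$. First I would dispose of the upper bound $E_q[n,k] \le \frac{q^n-1}{q^k-1}$, which in fact holds for all $n$ and $k$ regardless of divisibility. The key observation is that in a partial spread any two distinct subspaces meet only in $\{ \mathbf{0} \}$, so their sets of nonzero vectors are pairwise disjoint. Since each $k$-subspace contains exactly $q^k - 1$ nonzero vectors and all of these lie in $\F_q^n \setminus \{ \mathbf{0} \}$, a partial spread of $N$ subspaces satisfies $N(q^k - 1) \le q^n - 1$, which yields the bound.

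For the lower bound I would exhibit a spread meeting this bound when $k \mid n$, using the classical field-extension (Desarguesian) construction. Identify $\F_q^n$ with the field $\F_{q^n}$, regarded as an $n$-dimensional vector space over $\F_q$. Because $k \mid n$, the field $\F_{q^n}$ contains $\F_{q^k}$ as a subfield, and $\F_{q^k}$ is then a $k$-dimensional $\F_q$-subspace of $\F_{q^n}$. For each $\alpha \in \F_{q^n} \setminus \{ \mathbf{0} \}$ the scaled set $\alpha \F_{q^k} := \{ \alpha x : x \in \F_{q^k} \}$ is again a $k$-dimensional $\F_q$-subspace, since multiplication by $\alpha$ is an $\F_q$-linear bijection of $\F_{q^n}$.

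It then remains to verify that these scaled subspaces form a spread of the desired size. The subspaces $\alpha \F_{q^k}$ and $\beta \F_{q^k}$ coincide exactly when $\alpha \beta^{-1} \in \F_{q^k} \setminus \{ \mathbf{0} \}$, that is, when $\alpha$ and $\beta$ lie in the same coset of the multiplicative subgroup $\F_{q^k}^*$ inside $\F_{q^n}^*$; otherwise their intersection is trivial. Consequently the distinct subspaces so obtained are indexed by the cosets, and their number is the index $[ \F_{q^n}^* : \F_{q^k}^* ] = \frac{q^n-1}{q^k-1}$. Since the nonzero vectors of these subspaces partition $\F_{q^n} \setminus \{ \mathbf{0} \}$, they form a partial spread attaining the upper bound, which proves the equality. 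The upper bound is a one-line counting argument, so the only substantive content is this construction; the main point requiring care is the disjointness claim, namely that $\alpha \F_{q^k} = \beta \F_{q^k}$ forces $\alpha \beta^{-1}$ to lie in $\F_{q^k}$, and I expect no serious obstacle beyond this verification.
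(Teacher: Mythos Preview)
Your argument is correct: the counting upper bound and the Desarguesian spread construction via the subfield $\F_{q^k}\subseteq\F_{q^n}$ together establish the equality, and the coset verification you flag is indeed the only point requiring attention. Note, however, that the paper does not supply its own proof of this theorem; it simply records it as a well-known result and cites~\cite{EtVa11}, so there is no ``paper's proof'' to compare against---your write-up is in fact the standard classical argument that such a citation would point to.
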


\begin{theorem}
\label{thm:upperk_2k}
$E_q [n,k] \leq \left\lfloor \frac{q^n-1}{q^k-1} \right\rfloor -1$
if $n \not\equiv 0~(\text{mod}~k)$.
\end{theorem}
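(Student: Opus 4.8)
The plan is to prove the upper bound
$$E_q[n,k] \leq \left\lfloor \frac{q^n-1}{q^k-1}\right\rfloor - 1 \quad\text{when } n\not\equiv 0 \pmod{k}$$
by a counting argument on nonzero vectors, combined with a contradiction that exploits the non-divisibility. First I would recall the elementary fact that the subspaces in a partial spread, being pairwise disjoint (intersecting only in $\{\mathbf 0\}$), have pairwise disjoint sets of nonzero vectors: each $k$-subspace contains exactly $q^k-1$ nonzero vectors, and $\F_q^n$ contains exactly $q^n-1$ nonzero vectors in total. Hence if a partial spread has $N$ subspaces, then $N(q^k-1) \leq q^n-1$, which immediately gives the weaker bound $N \leq \left\lfloor \frac{q^n-1}{q^k-1}\right\rfloor$. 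The content of the theorem is to improve this by one under the non-divisibility hypothesis.

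The key step is to show that the naive bound cannot be attained with equality when $k \nmid n$. Here I would argue as follows. Suppose for contradiction that $N = \left\lfloor \frac{q^n-1}{q^k-1}\right\rfloor$. Write $q^n - 1 = N(q^k-1) + r$, where $0 \le r < q^k-1$ is the number of nonzero vectors of $\F_q^n$ not covered by any subspace of the spread. Since $k\nmid n$, one checks that $q^k-1$ does not divide $q^n-1$, so $r$ is strictly positive; I expect the clean way to see this is the standard identity $\gcd(q^k-1,q^n-1)=q^{\gcd(k,n)}-1$, whence $q^k-1 \mid q^n-1$ exactly when $k\mid n$. Thus there is at least one uncovered nonzero vector, but in fact the subtlety is that $r$ nonzero vectors are left over and one must show this leftover set, together with the spread, forces a contradiction with the assumed maximality count — equivalently that the number of uncovered points is too small to be consistent with the leftover space having to be a union of full $\F_q$-lines minus the origin.

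The hard part, and the main obstacle, will be turning the mere existence of uncovered vectors into the sharpened bound; a raw count only rules out exceeding $\lfloor\frac{q^n-1}{q^k-1}\rfloor$, not attaining it. The decisive observation I would use is that the set of uncovered nonzero vectors is closed under scalar multiplication by $\F_q^*$ (since none of the $N$ subspaces contains any of them, and the subspaces are $\F_q$-subspaces), so the number $r$ of uncovered nonzero vectors must be a multiple of $q-1$. Combined with $r \equiv q^n - 1 \equiv 0 \pmod{q-1}$ this is automatic, so the real refinement must come from a finer invariant: I would examine the residue $r = (q^n-1) - N(q^k-1) \pmod{q^k-1}$ and show that if $r < q^k - 1$ and $r \neq 0$, then $r$ is in fact large enough (at least $q^k - q^{k-1}$ or a similar threshold) to accommodate an additional disjoint $k$-subspace only if the count were one smaller, forcing $N \le \lfloor\frac{q^n-1}{q^k-1}\rfloor - 1$. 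Concretely I would compute $q^n - 1 \pmod{q^k-1}$ using $n = ak + b$ with $0 < b < k$, obtaining $q^n \equiv q^b \pmod{q^k-1}$ and hence $r \equiv q^b - 1 \pmod{q^k-1}$; since $0 < b < k$ gives $0 < q^b - 1 < q^k - 1$, the leftover is exactly $r = q^b - 1$, a genuinely positive but sub-$(q^k-1)$ quantity, and I would close the argument by showing this forces the strict inequality claimed. I expect the cleanest rendering of this final step to reduce to the inequality $N(q^k-1) + (q^b-1) = q^n-1$ being incompatible with $N = \lfloor \frac{q^n-1}{q^k-1}\rfloor$ once one accounts correctly for the division, so that the floor already absorbs the extra $-1$.
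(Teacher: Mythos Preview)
Your proposal has a genuine gap. You correctly set up the counting: writing $n=ak+b$ with $0<b<k$, a partial spread of size $N$ leaves exactly $r=q^n-1-N(q^k-1)$ uncovered nonzero vectors, and if $N=\left\lfloor\frac{q^n-1}{q^k-1}\right\rfloor$ then $r=q^b-1$. But then you stall. Your suggestion that ``$N(q^k-1)+(q^b-1)=q^n-1$ is incompatible with $N=\lfloor(q^n-1)/(q^k-1)\rfloor$'' is simply false: that equation \emph{is} the division with remainder, and it is exactly what makes $N$ equal to the floor. Closure of the hole set under $\F_q^*$-scaling is automatic and gives nothing beyond $r\equiv 0\pmod{q-1}$, which you already noted is vacuous. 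Nothing you have written rules out $N=\lfloor(q^n-1)/(q^k-1)\rfloor$; you have only re-derived the trivial packing bound.

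The paper does not prove this theorem either (it is quoted as well known, with a reference), so there is no ``paper's proof'' to compare to. For your benefit, here is the missing idea. Work projectively: the hole set $D$ has $|D|=\frac{q^b-1}{q-1}$ points. For any hyperplane $H$, each spread member $S_i$ satisfies $S_i\subseteq H$ or $\dim(S_i\cap H)=k-1$; a short computation then gives
\[
|D|-|D\cap H| \;=\; q^{n-1}-m\,q^{k-1}+a_H\,q^{k-1}\;\equiv\;0 \pmod{q^{k-1}},
\]
where $a_H$ is the number of $S_i$ contained in $H$. Since $0<|D|=\frac{q^b-1}{q-1}<q^{k-1}$ (as $b\le k-1$), the congruence together with $0\le |D\cap H|\le |D|$ forces $|D\cap H|=|D|$, i.e.\ $D\subseteq H$ for \emph{every} hyperplane $H$. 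Hence $D=\varnothing$, contradicting $|D|>0$. That contradiction is what knocks the bound down by one; pure cardinality counting cannot do it.
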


\begin{theorem}
\label{thm:lowerk_2k}
Let $n \equiv r~(\text{mod}~k)$. Then, for all
$q$, we have
$$
E_q[n,k] \geq  \frac{q^n - q^k(q^r -1)-1}{q^k-1}.
$$
\end{theorem}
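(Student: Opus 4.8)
The plan is to derive the bound from a single recursive inequality and then telescope. Concretely, I will establish
$$
E_q[n,k] \geq E_q[n-k,k] + q^{n-k},
$$
valid whenever $n-k \geq k$, and unwind it down to the trivial base case $E_q[k+r,k] \geq 1$, where $n \equiv r \ (\mathrm{mod}\ k)$ and we write $n = mk+r$. The reason to expect this exact recursion is that the proposed bound, when evaluated at $n$ and at $n-k$, differs by precisely $\frac{q^n - q^{n-k}}{q^k-1} = q^{n-k}$, so each recursive step must contribute exactly $q^{n-k}$ fresh subspaces.

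For the recursive step, decompose $\F_q^n = V' \oplus U$ with $\dim V' = n-k$ and $\dim U = k$, and view a partial spread $\mathcal{S}'$ of $V'$ as a family of pairwise disjoint $k$-subspaces of $\F_q^n$. I will adjoin to $\mathcal{S}'$ the graphs $\Gamma_f = \{(f(u),u) : u \in U\}$ of suitable linear maps $f\colon U \to V'$. Each $\Gamma_f$ is a $k$-subspace meeting $V'$ only in $\{\mathbf{0}\}$ (its $U$-component vanishes on $V'$), hence is disjoint from every member of $\mathcal{S}'$; and two graphs $\Gamma_f,\Gamma_g$ are disjoint exactly when $f-g$ is injective. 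Thus the task reduces to producing a family of $q^{n-k}$ maps whose pairwise differences all have full rank. This is the crux of the argument, and it is where the hypothesis $n-k \geq k$ is used: identify $V' \cong \F_{q^{n-k}}$, choose an $\F_q$-subspace $L \subseteq \F_{q^{n-k}}$ of dimension $k$ in the role of $U$, and for each $c \in \F_{q^{n-k}}$ let $f_c$ be multiplication by $c$. Then $f_c - f_{c'} = f_{c-c'}$ is multiplication by a nonzero field element whenever $c \neq c'$, hence a bijection of $\F_{q^{n-k}}$ and in particular injective on $L$. This yields $q^{n-k}$ pairwise disjoint graphs, each disjoint from $V'$, so $|\mathcal{S}'| + q^{n-k}$ pairwise disjoint $k$-subspaces altogether, which is the recursion. (Equivalently, this family is a maximal rank-metric code of full minimum rank distance; cf.~\cite{Gab85}.)

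Finally I would unwind the recursion. Applying the step $m-1$ times starting from $E_q[k+r,k]\geq 1$ is legitimate because the last application occurs at dimension $2k+r$, where the factor $V'$ has dimension $k+r \geq k$ and full-rank maps $U \to V'$ still exist. This gives
$$
E_q[n,k] \geq 1 + \sum_{i=1}^{m-1} q^{ik+r},
$$
and summing the geometric series yields $1 + \frac{q^n - q^{k+r}}{q^k-1} = \frac{q^n - q^k(q^r-1) - 1}{q^k-1}$, the claimed bound; for $r=0$ this collapses to $\frac{q^n-1}{q^k-1}$, recovering the exact spread value of Theorem~\ref{thm:exactk_2k}. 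The only genuinely nontrivial ingredient is the construction of the $q^{n-k}$ maps with pairwise full-rank differences; the disjointness accounting and the geometric sum are routine.
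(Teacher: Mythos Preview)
The paper does not actually prove this theorem: it is listed among ``the first three well-known theorems'' and attributed to~\cite{EtVa11} without argument. So there is no in-paper proof to compare against.

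Your proposal is correct and is essentially the classical construction behind this bound. The recursion $E_q[n,k]\ge E_q[n-k,k]+q^{n-k}$ is exactly what one obtains by lifting a full-rank-distance rank-metric code of size $q^{n-k}$ (your multiplication maps $f_c$ in $\F_{q^{n-k}}$ form a Gabidulin-type code, cf.~\cite{Gab85}) and adjoining the resulting graphs to a partial spread of the complementary $(n-k)$-space; this is the standard route taken in~\cite{Beu75,EtVa11}. Your verifications are all sound: each $\Gamma_{f_c}$ meets $V'$ trivially, two graphs $\Gamma_{f_c},\Gamma_{f_{c'}}$ intersect trivially because $f_{c-c'}$ is injective on $L$, the hypothesis $n-k\ge k$ is exactly what is needed to embed a $k$-dimensional $L$ inside $\F_{q^{n-k}}$, the last recursive step at dimension $2k+r$ is legitimate since $k+r\ge k$, and the geometric sum $1+\sum_{i=1}^{m-1}q^{ik+r}$ simplifies to the stated bound.
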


The next theorem was proved in~\cite{HoPa72} for $q=2$ and for
any other $q$ in~\cite{Beu75}.

\begin{theorem}
\label{thm:S_exactk_2k}
If $n \equiv 1~(\text{mod}~k)$ then $E_q[n,k] = \frac{q^n-q}{q^k-1} -q+1=\sum_{i=1}^{\frac{n-1}{k}-1} q^{ik+1}+1$.
\end{theorem}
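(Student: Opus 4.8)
My plan is to split the claimed equality into its two halves and notice that one half is already available. Writing $n = mk+1$, a routine manipulation gives the three-way identity
\[
\frac{q^n-q}{q^k-1}-q+1 \;=\; 1 + \sum_{i=1}^{m-1} q^{ik+1} \;=\; \frac{q^n - q^k(q-1) - 1}{q^k-1},
\]
so the value in the statement coincides with the lower bound produced by Theorem~\ref{thm:lowerk_2k} for the residue $r=1$. Hence a partial $k$-spread of the required size exists, and the entire content of the theorem is the matching upper bound $E_q[n,k] \le \frac{q^n-q}{q^k-1}-q+1$.

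It is cleaner to restate this upper bound in terms of \emph{holes}, i.e.\ points (one-dimensional subspaces) of $\F_q^n$ lying in no codeword. A partial spread with $s$ codewords covers exactly $s\,\frac{q^k-1}{q-1}$ of the $\frac{q^n-1}{q-1}$ points, so the number of holes equals $\frac{q^n-1}{q-1} - s\,\frac{q^k-1}{q-1}$. Substituting the target value of $s$ shows that this number is exactly $q^k$ there, so the desired inequality is equivalent to the assertion that \emph{every partial $k$-spread in $\F_q^{mk+1}$ leaves at least $q^k$ holes}. The whole problem thus becomes a lower bound on the number of uncovered points.

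To attack this I would exploit the hypothesis $k \mid (n-1)$ through a hyperplane section. Fixing a hyperplane $H \cong \F_q^{mk}$, I split the spread $S$ into the $a$ codewords contained in $H$ and the $b = s-a$ codewords meeting $H$ in a $(k-1)$-subspace. Since $k$ divides $\dim H = mk$, the codewords inside $H$ form a genuine partial $k$-spread of $H$, to which the \emph{exact} count of Theorem~\ref{thm:exactk_2k} applies, while the traces of the remaining codewords are pairwise disjoint $(k-1)$-subspaces of $H$, each disjoint from the former. Counting the covered points separately inside $H$ and in its complement yields the relation $a(q^k-1)+b(q^{k-1}-1)\le q^{mk}-1$ together with $a\le \frac{q^{mk}-1}{q^k-1}$, linking $s$, $a$ and $b$, and one then wants to iterate the section by descending through dimensions that are multiples of $k$.

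The main obstacle is that this local count, for a single hyperplane (or averaged over all of them), is far too weak: it only reproduces the trivial estimate $s(q^k-1)\le q^n-1$ and permits configurations with many fewer than $q^k$ holes — for instance in $\F_2^5$ it merely gives $s\le 15$ instead of the true value $9$. Closing this gap is exactly Beutelspacher's refinement: one must choose the hyperplane so as to force $a$ to be large, and then track how the holes redistribute between $H$ and its complement as the section is iterated. The delicate structural step, where essentially all the work lies, is to show that a spread exceeding the claimed size would force a hyperplane whose induced mixed configuration of $k$- and $(k-1)$-subspaces violates the exact spread count in $H$ — equivalently, that the holes can never be compressed below $q^k$ points. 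I expect this compression/rigidity argument for the holes, rather than any of the counting identities, to be the crux of the proof.
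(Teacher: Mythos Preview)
The paper does not supply its own proof of this theorem; it is quoted as a known result, attributed to Hong--Patel for $q=2$ and to Beutelspacher for arbitrary $q$, with no argument reproduced. So there is nothing in the paper to compare your approach against beyond those citations.

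Regarding your proposal on its own merits: the reduction of the lower bound to Theorem~\ref{thm:lowerk_2k} with $r=1$ is correct, and your reformulation of the upper bound as ``every partial $k$-spread in $\F_q^{mk+1}$ leaves at least $q^k$ holes'' is the right target. But you do not actually prove that upper bound. You set up a hyperplane section, observe (correctly) that the naive point count over a single hyperplane or an average over all hyperplanes only recovers the trivial bound $s(q^k-1)\le q^n-1$, and then state that Beutelspacher's refinement must involve a clever choice of section and an inductive tracking of holes --- and stop. Your closing sentence, ``I expect this compression/rigidity argument for the holes \ldots\ to be the crux of the proof,'' is an accurate diagnosis, not a proof. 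What is missing is exactly the substance of Beutelspacher's paper: an inductive descent in which one passes from $\F_q^{mk+1}$ to a carefully selected subspace of dimension $(m-1)k+1$ in such a way that a hypothetical partial spread with fewer than $q^k$ holes would induce one with still fewer holes in the smaller space, contradicting the induction hypothesis. Until that descent is actually carried out, what you have written is a correct map of the terrain but not a proof.
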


Theorem~\ref{thm:S_exactk_2k} was extended for the case where
$q=2$ and $k=3$ in~\cite{EJSSS} as follows.

\begin{theorem}
If $n \equiv c~(\text{mod}~3)$ then $E_2[n,3] = \frac{2^n-2^c}{7} -c$.
\end{theorem}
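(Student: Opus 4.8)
Throughout write $N$ for the number of elements of a partial $3$-spread under consideration, and recast the problem in terms of \emph{holes}, i.e.\ nonzero vectors of $\F_2^n$ lying in no spread element. A partial spread of size $N$ covers $7N$ nonzero vectors, so the number of holes is $h=2^n-1-7N$, and since $2^n\equiv 2^c\pmod 7$ every partial spread satisfies $h\equiv 2^c-1\pmod 7$. A short computation shows that the asserted value $\frac{2^n-2^c}{7}-c$ is equivalent to $h=2^c-1+7c$, i.e.\ to the claim that the minimum number of holes is $0,8,17$ for $c=0,1,2$. The cases $c=0$ and $c=1$ are exactly Theorem~\ref{thm:exactk_2k} and Theorem~\ref{thm:S_exactk_2k} (the formula collapses to them), so the whole content is the case $c=2$, $n\ge 8$: the minimum number of holes is $17$. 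As the admissible hole counts below $17$ are $h=3$ and $h=10$, the upper bound amounts to excluding these two values, and the lower bound to a construction attaining $h=17$.

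For the upper bound the main tool is hyperplane slicing. Fix a hyperplane $\Pi$, let $h_\Pi$ be the number of holes in $\Pi$ and $s_\Pi$ the number of spread elements contained in $\Pi$; any spread element not inside $\Pi$ meets $\Pi$ in a $2$-dimensional subspace, and these traces are pairwise disjoint, so counting the nonzero vectors of $\Pi$ gives
$$ h_\Pi \;=\; 2^{n-1}-1-3N-4\,s_\Pi . $$
Hence $h_\Pi$ is congruent modulo $4$ to a constant $\mu$ depending only on $N$. If $h=3$ one finds $\mu\equiv 3\pmod 4$, forcing $h_\Pi=3$ for \emph{every} hyperplane; but a hyperplane avoiding one of the three holes has $h_\Pi\le 2$, a contradiction. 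If $h=10$ one finds $\mu\equiv 2\pmod 4$, so $h_\Pi\in\{2,6,10\}$ for every $\Pi$. Writing $x,y,z$ for the numbers of hyperplanes with $2,6,10$ holes, the three counts $\sum_\Pi 1=2^n-1$, $\sum_\Pi h_\Pi=10(2^{n-1}-1)$, and $\sum_\Pi\binom{h_\Pi}{2}=\binom{10}{2}(2^{n-2}-1)$ (a pair of distinct holes spans a $2$-dimensional subspace and so lies in $2^{n-2}-1$ hyperplanes) form a linear system whose unique solution has $z=-(2^{n-6}+1)<0$, again a contradiction. This excludes $h\in\{3,10\}$ and yields $h\ge 17$.

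For the lower bound one must exhibit a partial spread with exactly $17$ holes. Writing $n=3l+2$ and viewing $\F_2^{n}=\F_8^{\,l}\oplus\F_2^{2}$, the $\F_8$-lines form a spread of the subspace $\F_8^{\,l}\oplus\{0\}$, covering its $2^{3l}-1$ nonzero vectors with no holes; the remaining $3\cdot 2^{3l}$ vectors are those with nonzero $\F_2^2$-component, and one wants to cover all but $17$ of them by $\frac{3\cdot 2^{3l}-17}{7}$ further disjoint $3$-subspaces. The obstruction is that no $3$-subspace has all of its nonzero vectors outside $\F_8^{\,l}\oplus\{0\}$: its projection to $\F_2^2$ is a subspace, so a $3$-subspace always meets $\F_8^{\,l}\oplus\{0\}$ in $1$ or $3$ nonzero vectors. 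Thus the covering of the off-part cannot simply be added on top of the full $\F_8$-spread; one must instead remove a controlled family of $\F_8$-lines and re-cover their points together with the off-part, arranging the bookkeeping so that precisely $17$ holes survive. I would set this up recursively, reducing $\F_2^{n}$ to $\F_2^{\,n-3}$ and using an explicit base construction in $\F_2^{8}$ attaining $34$ elements.

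The hard part is this construction. The upper-bound argument is short once the hyperplane identity and the divisibility modulo $4$ are in place, whereas producing a partial spread that simultaneously covers the $\F_8$-part and almost all of the off-part — resolving the overlap above while leaving exactly $17$ uncovered vectors, uniformly in $n$ — is where the real work and all of the case analysis reside. The natural plan is an induction on $l$ in which one proves, as a strengthened hypothesis, that the $17$ surviving holes can be placed in a fixed self-similar position, so that the recursion closes and the base case in $\F_2^{8}$ suffices to launch it.
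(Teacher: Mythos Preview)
The paper does not prove this theorem: it is quoted verbatim from the reference~\cite{EJSSS} (El-Zanati, Jordon, Seelinger, Sissokho, Spence), with no argument given. So there is no ``paper's own proof'' to compare your proposal against.

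That said, your outline is essentially the strategy of the cited source. The reduction to holes, the hyperplane identity $h_\Pi=2^{n-1}-1-3N-4s_\Pi$, and the exclusion of $h=3$ and $h=10$ by the mod-$4$ constraint plus the standard double/triple counts are exactly how the upper bound is obtained in~\cite{EJSSS}; your linear system indeed forces $z=-(2^{n-6}+1)<0$, so that part is correct and complete. For the lower bound, your description is only a plan: you correctly identify that Theorem~\ref{thm:lowerk_2k} falls one short and that an explicit base in $\F_2^8$ with $34$ planes plus a recursion on $n\mapsto n+3$ is what is needed, but you have not actually produced either the base configuration or the recursive step, and you flag this yourself as ``the hard part.'' In~\cite{EJSSS} this is done by an explicit construction, so as a proof your proposal is incomplete precisely at the point where the real content lies.
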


The upper bound implied by
Theorem~\ref{thm:S_exactk_2k} was improved for some cases
in~\cite{DrFr79}.

\begin{theorem}
\label{thm:best_U_bound}
If $n=k \ell + c$ with $0 < c < k$, then
$E_q[n,k] \leq \sum_{i=0}^{\ell-1} q^{ik+c} -\Omega -1$,
where $2 \Omega = \sqrt{1+4q^k(q^k-q^c)} -(2q^k-2q^c+1)$.
\end{theorem}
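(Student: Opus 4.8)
The plan is to recognize this as the Drake--Freeman bound and to prove it by a hyperplane incidence count in the projective space $PG(n-1,q)$. Identify a partial $k$-spread with a family of $m := E_q[n,k]$ pairwise disjoint $(k-1)$-flats, and call a projective point a \emph{hole} if it lies on no flat of the family. Writing $\theta_j := \frac{q^{j+1}-1}{q-1}$ for the number of points of $PG(j,q)$, the number of holes is $r = \theta_{n-1} - m\,\theta_{k-1}$. The first thing I would record is that the assertion is purely a statement about $r$: since $\sum_{i=0}^{\ell-1}q^{ik+c} = \frac{q^n-q^c}{q^k-1}$ (the sum telescopes after multiplying by $q^k-1$), the claimed inequality is equivalent, after setting $\sigma := \frac{q^n-q^c}{q^k-1}-m$, to $\sigma \ge \Omega+1$. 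A short computation shows that the stated $\Omega$ is exactly the positive root of $t^2 + (2(q^k-q^c)+1)\,t - (q^k-q^c)(q^c-1) = 0$, since its discriminant is $(2(q^k-q^c)+1)^2 + 4(q^k-q^c)(q^c-1) = 1+4q^k(q^k-q^c)$. As this quadratic is increasing for $t\ge 0$ and $\sigma>0$, the whole theorem reduces to proving the single quadratic inequality $(\sigma-1)^2 + (2(q^k-q^c)+1)(\sigma-1) \ge (q^k-q^c)(q^c-1)$.

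Next I would set up the incidence count (assuming $\ell\ge 2$, the case $\ell=1$ being trivial since then $2k>n$ and no two $k$-subspaces are disjoint, so $m=1$). For a hyperplane $H$ let $b_H$ be the number of spread flats contained in $H$; every other flat meets $H$ in a $(k-2)$-flat, and since the flats are pairwise disjoint all the covered points are distinct, so the number $w_H$ of holes in $H$ is $w_H = \theta_{n-2} - m\,\theta_{k-2} - q^{k-1}b_H$, using $\theta_{k-1}-\theta_{k-2}=q^{k-1}$. In particular $w_H \ge 0$ forces $b_H \le (\theta_{n-2}-m\theta_{k-2})/q^{k-1}$, and all the $w_H$ are congruent modulo $q^{k-1}$. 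Double counting then gives $\sum_H 1 = \theta_{n-1}$, $\sum_H b_H = m\,\theta_{n-k-1}$ (each flat lies in $\theta_{n-k-1}$ hyperplanes), and $\sum_H b_H(b_H-1) = m(m-1)\,\theta_{n-2k-1}$ (two disjoint flats span a $(2k-1)$-flat, which lies in $\theta_{n-2k-1}$ hyperplanes); equivalently, the first and second moments of $w_H$ over the hyperplanes are completely determined by $m$.

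The heart of the matter, and the step I expect to be hardest, is that these moment identities are forced and so yield no bound by themselves: the elementary ``variance $\ge 0$'' estimate does give an inequality, but only a rational one, strictly weaker than the square-root bound sought here. To extract the sharp quadratic in $\sigma$ one must use the non-negativity and integrality of $b_H$ more strongly, through the extremal hyperplanes. The cleanest route, and the one underlying Drake and Freeman's original argument, is to pass to the transversal design / $(s,r,\mu)$-net attached to the partial spread and invoke the classical deficiency bound for nets, whose proof is an integrality/eigenvalue argument; it is precisely this eigenvalue step that produces the square root $\sqrt{1+4q^k(q^k-q^c)}$. Concretely, I would analyze a hyperplane $H_0$ carrying the maximum number of spread flats: inside $H_0\cong PG(n-2,q)$ the remaining flats induce pairwise disjoint $(k-2)$-flats together with $w_{H_0}$ holes, and bounding $w_{H_0}$ below against $\sigma$ should yield one factor of the product $(q^k-q^c)(q^c-1)$, while a dual count over the hyperplanes meeting $H_0$ yields the other; the interaction of the two is what forms the quadratic. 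Completing this interaction rigorously, rather than settling for the variance estimate, is the genuine obstacle.

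Finally, granting the inequality $(\sigma-1)^2 + (2(q^k-q^c)+1)(\sigma-1) \ge (q^k-q^c)(q^c-1)$, I would solve it: the left-hand side is increasing in $\sigma$ for $\sigma\ge 1$, so $\sigma-1$ is at least the positive root $\Omega$, giving $m = \frac{q^n-q^c}{q^k-1} - \sigma \le \sum_{i=0}^{\ell-1}q^{ik+c} - \Omega - 1$, as required. All the $q$-binomial bookkeeping in the first two paragraphs is routine; the eigenvalue/net input of the third paragraph is where the real work lies.
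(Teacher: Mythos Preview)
The paper does not prove this theorem at all; it is quoted from Drake and Freeman~\cite{DrFr79} without proof. So there is no ``paper's own proof'' to compare your attempt against.

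That said, your outline is the correct one and is essentially Drake and Freeman's own route: translate the bound into the quadratic inequality $(\sigma-1)^2+(2(q^k-q^c)+1)(\sigma-1)\ge (q^k-q^c)(q^c-1)$ in the deficiency $\sigma=\frac{q^n-q^c}{q^k-1}-m$, set up the hyperplane double count, and then appeal to the deficiency bound for $(s,r,\mu)$-nets. Your algebraic reduction in the first paragraph and the moment identities in the second are both right. But the proposal is, by your own admission, incomplete at the decisive point. You correctly observe that the plain variance inequality from the moments is too weak and that the real content is the eigenvalue/integrality argument for nets that produces the square root $\sqrt{1+4q^k(q^k-q^c)}$; you then write that ``completing this interaction rigorously \ldots\ is the genuine obstacle'' and continue with ``granting the inequality.'' That is a plan, not a proof: the theorem lives precisely in the step you skipped. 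To finish, you would actually have to carry out the Bose--Bruck--Connor style argument (or Drake--Freeman's version of it) showing that a net of order $q^k$ and deficiency $\sigma-1$ with block size $q^c$ forces the stated quadratic; nothing short of that eigenvalue step will close the gap.

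A minor side remark: your handling of $\ell=1$ is not quite ``trivial'' in the sense you claim. It is true that $m=1$ when $n<2k$, but the stated bound then reads $m\le q^c-\Omega-1$, which for instance is below $1$ when $q=2$, $c=1$. So the theorem as literally phrased presumably carries an implicit hypothesis $n\ge 2k$ (i.e.\ $\ell\ge 2$), and the $\ell=1$ case is outside its scope rather than a base case of its proof.
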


\subsection{Extension of a code}
\label{sec:extension}

An $(n,d,w)$ constant weight equidistant code $\bC$ is \emph{extended}
to an $(n+1,d,w+1)$ constant weight equidistant code $\cE(\bC)$ by
adding a column of \emph{ones} to the code. Similarly,
an $[n,d,k]_q$ constant dimension equidistant code $\C$
is \emph{extended} to an $[n+1,d,k+1]_q$ constant dimension
equidistant code $\cE(\C)$ as follows. We  first define $\F_q^{n+1}$ by
$\F_q^{n+1} = \{ (x,\alpha) ~\vert~ x \in \F_q^n ,~ \alpha \in \F_q \}$.
For a subspace $X \in \cG_q(n,k)$ let $(X,0)$ be a subspace in $\cG_q(n+1,k)$
defined by $(X,0) = \{ (x,0) ~\vert~ x \in X \}$.
Let $v \in \F_q^{n+1} \setminus \{ (x,0) ~\vert~ x \in \F_q^n \}$ and
$\C \subset \cG_q(n,k)$. We define the extended code $\cE(\C)$ by
$$
\cE(\C) = \{ \Span{ (X,0) \cup \{ v \}} ~\vert~ X \in \C \}  ~.
$$
The following theorem can be easily verified.

\begin{theorem}
If $\bC$ and $\C$ are trivial codes, then the extended codes
$\cE(\bC)$ and $\cE(\C)$ are also trivial codes.
\end{theorem}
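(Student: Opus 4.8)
The plan is to prove the two assertions separately and, for the constant dimension code, to treat the two defining conditions of triviality independently. The binary case is immediate: extension leaves every original column of $\bC$ unchanged and appends one column of ones. The new column has $m$ equal entries, and each original column retains its $m$ or $m-1$ equal entries, so $\cE(\bC)$ is trivial with no further work.

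For the constant dimension code, I would first fix $v = (v_0,\alpha)$ with $\alpha \neq 0$ (so that indeed $v \notin \{(x,0) : x \in \F_q^n\}$) and write $Y_X = \Span{(X,0) \cup \{v\}}$ for each $X \in \C$. The key preliminary step, which underlies both cases, is an explicit description of the points of $Y_X$: since every element of $Y_X$ has the form $(x,0) + cv = (x + cv_0, c\alpha)$ with $x \in X$ and $c \in \F_q$, a vector $(w,\beta) \in \F_q^{n+1}$ lies in $Y_X$ if and only if the uniquely determined vector $x' := w - (\beta/\alpha)v_0 \in \F_q^n$ lies in $X$. Thus membership of $(w,\beta)$ in the extended codewords is governed, through a bijective correspondence $(w,\beta) \leftrightarrow x'$, by membership of $x'$ in the original codewords.

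With this correspondence in hand, condition~1 follows by transfer: if $\C$ is a sunflower then $x'$ lies in all, none, or exactly one $X \in \C$, hence $(w,\beta)$ lies in all, none, or exactly one $Y_X$, and $\cE(\C)$ is again a sunflower. For condition~2, let $\T = \Span{\bigcup_{X \in \C} X}$ be the $(k+1)$-subspace promised by triviality of $\C$. I would show that the smallest subspace containing all $Y_X$ is $\Span{(\T,0) \cup \{v\}}$: each $Y_X$ is contained in it, while jointly the $Y_X$ contain $v$ together with $(\T,0) = \Span{\bigcup_X (X,0)}$, so their span is the whole space, which is $(k+2)$-dimensional because $v$ has nonzero last coordinate and so lies outside $(\T,0)$. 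This is exactly condition~2 for a code in $\cG_q(n+1,k+1)$.

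It remains to record that $\cE(\C)$ really is an equidistant $[n+1,d,k+1]_q$ code, as its name asserts; I would verify $Y_X \cap Y_{X'} = \Span{(X \cap X',0) \cup \{v\}}$, so that $\dim(Y_X \cap Y_{X'}) = \dim(X \cap X') + 1$ and hence $d_S(Y_X,Y_{X'}) = d_S(X,X')$ for all $X,X' \in \C$. The theorem is genuinely easy, and the only place demanding a little care is the explicit parametrization of $Y_X$ and the verification that $x'$ is well-defined and unique (which is exactly where $\alpha \neq 0$ is used); once that correspondence is set up, both triviality conditions and the distance computation fall out mechanically.
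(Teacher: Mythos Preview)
Your proof is correct. The paper does not actually supply a proof of this theorem---it merely states that it ``can be easily verified''---so your argument is not being compared against anything explicit. What you have written is exactly the routine verification the paper alludes to: the parametrization $(w,\beta)\in Y_X \iff w-(\beta/\alpha)v_0\in X$ is the natural device, and from it the sunflower condition, the ball condition, and the preservation of subspace distance all follow immediately as you describe.
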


We note that the extended code $\cE (\C)$ of a trivial
code $\C$ is not unique, but all
such extended codes are isomorphic. An $[n,d,k]_q$ constant dimension
equidistant code $\C$ can be extended $\ell$ times
to an $[n+\ell,d,k+\ell]_q$ constant dimension
equidistant code. This extended code will be denoted by~$\cE^\ell (\C)$.

\subsection{Sunflowers}
\label{sec:sunflower}

A partial spread is clearly
a 0-intersecting sunflower. For a given $n,k$ such that $0 < k<n$ we construct
the largest $t$-intersecting sunflower in $\cG_q(n,k)$ by
using the following two simple theorems.

\begin{theorem}
\label{thm:construct}
If $\bS$ is a $t$-intersecting sunflower in $\cG_q(n,k)$, then
$\cE(\bS)$ is a $(t+1)$-intersecting sunflower in $\cG_q(n+1,k+1)$.
\end{theorem}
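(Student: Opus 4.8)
The plan is to verify directly that $\cE(\bS)$ is a sunflower whose center has dimension $t+1$, since by definition any such sunflower is automatically $(t+1)$-intersecting. Let $Z = Cen(\bS)$ be the $t$-dimensional center of $\bS$, so that $X \cap Y = Z$ for every pair of distinct $X, Y \in \bS$. By the choice of the extension vector, $v = (w,\alpha)$ with $w \in \F_q^n$ and $\alpha \neq 0$; the fact that $\alpha \neq 0$ is exactly the condition $v \notin \{(x,0) : x \in \F_q^n\}$ built into the definition of $\cE$. First I would note that, since $v$ has a nonzero last coordinate, it is independent of $(X,0)$ for every $X$, so each $\cE(X) = \Span{(X,0) \cup \{v\}}$ is genuinely $(k+1)$-dimensional and $\cE(\bS) \subseteq \cG_q(n+1,k+1)$.

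The heart of the argument is the computation of $\cE(X) \cap \cE(Y)$ for distinct $X, Y \in \bS$. I would describe a general codeword vector as $(x,0) + cv$ with $x \in X$ and $c \in \F_q$, and likewise for $Y$. If $(x,0) + cv = (y,0) + c'v$ with $x \in X$ and $y \in Y$, then comparing the last coordinate and using $\alpha \neq 0$ forces $c = c'$, whence $(x,0) = (y,0)$ and $x = y \in X \cap Y = Z$. Conversely, because $Z \subseteq X$ and $Z \subseteq Y$, every vector of the form $(z,0) + cv$ with $z \in Z$ lies in both codewords. This shows $\cE(X) \cap \cE(Y) = \Span{(Z,0) \cup \{v\}}$, a subspace that does not depend on the chosen pair.

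Finally I would read off the conclusions. Since $v$ again lies outside $\F_q^n \times \{0\}$ and hence outside $(Z,0)$, the common intersection $\Span{(Z,0) \cup \{v\}}$ has dimension exactly $t+1$; as it is the same for all pairs, $\cE(\bS)$ is a sunflower with $Cen(\cE(\bS)) = \Span{(Z,0) \cup \{v\}}$ and is therefore $(t+1)$-intersecting. I would also record the bookkeeping fact that $X \mapsto \cE(X)$ is injective, which follows by intersecting $\cE(X)$ with the hyperplane $\F_q^n \times \{0\}$ to recover $(X,0)$, so that no codewords collapse. I do not expect any real obstacle here: the whole proof rests on the single observation that $v$ has a nonzero last coordinate, which cleanly separates the original coordinates from the new one. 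The only point needing minor care is ensuring both inclusions in the intersection and that the center dimension is exactly $t+1$ rather than merely at most $t+1$, both of which follow from the independence of $v$ from $\F_q^n \times \{0\}$.
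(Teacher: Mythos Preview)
Your argument is correct: the last-coordinate trick cleanly shows that $\cE(X)\cap\cE(Y)=\Span{(Z,0)\cup\{v\}}$, that this common intersection has dimension $t+1$, and that the extension map is injective. The paper itself does not supply a proof of this statement; it simply labels it a ``simple theorem'' and moves on, so your direct verification is exactly the kind of routine check the authors are implicitly invoking.
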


\begin{theorem}
\label{thm:proof}
Let $\bS$ be a $t$-intersecting sunflower in $\cG_q(n,k)$ and
let $X$ be an $(n-t)$-subspace of $\F_q^n$ such that
$X \oplus Cen(\bS) =\mathbb{F}_q^n$. Then the set
$\{ X \cap Y ~\vert~ Y \in \bS \}$ is a partial $(k-t)$-spread
in $X$.
\end{theorem}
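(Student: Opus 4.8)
The plan is to verify directly the two defining properties of a partial spread: that every subspace in the collection $\{X \cap Y \mid Y \in \bS\}$ has dimension exactly $k-t$, and that any two distinct members of the collection meet only in the zero subspace.

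First I would record the basic structural fact about the sunflower. Writing $Z = Cen(\bS)$, the sunflower property says that any two distinct codewords of $\bS$ intersect precisely in $Z$; in particular $Z \subseteq Y$ for every $Y \in \bS$, and $\dim Z = t$. Combined with the hypothesis $X \oplus Z = \F_q^n$, this immediately yields $X + Y = \F_q^n$ for each $Y \in \bS$, since $X + Y \supseteq X + Z = \F_q^n$. This uniform equality is the crux of the argument, and it is what forces the dimension to behave the same way for every codeword.

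For the dimension claim I would apply the modular (dimension) formula $\dim(X \cap Y) = \dim X + \dim Y - \dim(X+Y)$. Substituting $\dim X = n-t$, $\dim Y = k$, and $\dim(X+Y) = n$ from the previous step gives $\dim(X \cap Y) = (n-t) + k - n = k-t$, as required. This also shows that each intersection is nonzero whenever $k > t$, which holds for any sunflower with more than one codeword.

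For pairwise disjointness, take distinct $Y_1, Y_2 \in \bS$ and observe that $(X \cap Y_1) \cap (X \cap Y_2) = X \cap (Y_1 \cap Y_2)$. The sunflower property supplies $Y_1 \cap Y_2 = Z$, and the direct-sum hypothesis supplies $X \cap Z = \{ {\bf 0} \}$, so the two intersections meet only in $\{ {\bf 0} \}$. Since two distinct nonzero subspaces cannot be disjoint and equal simultaneously, this confirms that $\{X \cap Y \mid Y \in \bS\}$ is a set of pairwise disjoint $(k-t)$-subspaces of $X$, i.e. a partial $(k-t)$-spread in $X$. I do not anticipate a genuine obstacle; the only point meriting attention is establishing $X + Y = \F_q^n$, which is precisely what makes the dimension drop by exactly $t$ uniformly across all codewords and makes the collapse $(X \cap Y_1) \cap (X \cap Y_2) = X \cap Z$ available.
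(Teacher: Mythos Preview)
Your argument is correct and complete: the dimension computation via $X+Y=\F_q^n$ and the disjointness via $X\cap(Y_1\cap Y_2)=X\cap Z=\{ {\bf 0}\}$ are exactly the two points needed. The paper itself does not supply a proof of this theorem, presenting it as a ``simple theorem'' whose verification is left to the reader; your write-up is precisely the intended elementary argument.
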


If $\bS$ is the largest partial $(k-t)$-spread in $\cG_q(n-t,k-t)$,
then by Theorem~\ref{thm:construct} we have that $\cE^t (\bS)$ is
a $t$-intersecting sunflower in $\cG_q(n,k)$. By Theorem~\ref{thm:proof},
we have that
$\cE^t (\bS)$ is the largest $t$-intersecting sunflower in $\cG_q(n,k)$.

\subsection{Optimal $(k-1)$-intersecting equidistant codes in $\cG_q(n,k)$}
In this subsection we present a construction for optimal
$(k-1)$-intersecting equidistant code in $\cG_q(n,k)$ for any $n\ge k+1$.

\begin{theorem}
An optimal non-sunflower $(k-1)$-intersecting equidistant code in
$\cG_q(n,k)$, $n\ge k+1$ has $\qbin{k+1}{k}{q}$ subspaces.
\end{theorem}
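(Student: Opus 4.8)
The plan is to prove the claim in two halves: first exhibit a code of the stated size, and then show it is optimal among non-sunflowers by proving that \emph{every} non-sunflower $(k-1)$-intersecting code is a ball, i.e.\ is contained in a single $(k+1)$-subspace.

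For the construction, since $n \ge k+1$ I would fix a $(k+1)$-subspace $\T \subseteq \F_q^n$ and take $\C = \qbin{\T}{k}{q}$, the set of all $k$-subspaces of $\T$. Any two distinct $k$-subspaces $X,Y$ of $\T$ span $\T$, so $\dim(X\cap Y) = 2k-(k+1)=k-1$; hence $\C$ is $(k-1)$-intersecting of size $\qbin{k+1}{k}{q}$. It is not a sunflower for $k\ge 2$, since a fixed $(k-1)$-subspace of $\T$ lies in only $\qbin{2}{1}{q}=q+1 < \qbin{k+1}{k}{q}$ of the codewords, so no common center exists. This realizes the bound by a genuine non-sunflower code.

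For the matching upper bound, let $\C$ be an arbitrary non-sunflower $(k-1)$-intersecting code. First I would record that such a code is a sunflower exactly when all its codewords share one common $(k-1)$-subspace; therefore, being non-sunflower, $\C$ must contain three codewords $X,Y,Z$ whose three pairwise intersections are not all equal (otherwise, fixing $C=X\cap Y$, every codeword $Z$ would satisfy $X\cap Z=C$, forcing $C\subseteq Z$ and hence a sunflower). After relabelling I may assume $A:=X\cap Y$ and $B:=X\cap Z$ are distinct $(k-1)$-subspaces of $X$. The key step is a dimension count: $Y\cap Z\neq B$ (else $B\subseteq X\cap Y=A$, so $A=B$), hence $B$ and $Y\cap Z$ are distinct hyperplanes of $Z$ and $B+(Y\cap Z)=Z$; as both summands lie in $X+Y$, this yields $Z\subseteq X+Y$, so $\T:=X+Y$ is a $(k+1)$-subspace containing $X,Y,Z$.

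It then remains to show every $V\in\C$ lies in $\T$. If $V\not\subseteq\T$ then $\dim(V\cap\T)=k-1$ (at least $k-1$ because $V\cap X\subseteq V\cap\T$, and at most $k-1$ because $V\not\subseteq\T$), which forces $V\cap X=V\cap Y=V\cap Z=V\cap\T=:P$; but then $P\subseteq X\cap Y=A$ and $P\subseteq X\cap Z=B$ give $A=P=B$, contradicting $A\neq B$. Hence $\C\subseteq\qbin{\T}{k}{q}$ and $|\C|\le\qbin{k+1}{k}{q}$, matching the construction and proving optimality. I expect the crux to be the passage from the local ``bad triple'' to the global conclusion: producing the single $(k+1)$-subspace $\T$ through the containment $Z\subseteq X+Y$, and then ruling out any codeword escaping $\T$ without collapsing the distinct subspaces $A$ and $B$.
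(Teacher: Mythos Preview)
Your argument is correct and takes a genuinely different route from the paper. The paper's proof is a counting argument: it fixes a codeword $X$, groups the remaining codewords into sunflowers according to the $(k-1)$-subspace where they meet $X$, and shows via a disjointness count inside $X_1\setminus X$ (for $X_1$ in one such sunflower) that each of these local sunflowers has at most $q+1$ members; with at most $\qbin{k}{k-1}{q}$ possible centers in $X$ this yields $|\C'|\le 1+q\,\qbin{k}{k-1}{q}=\qbin{k+1}{k}{q}$. You instead prove a structural statement: from any triple $X,Y,Z$ witnessing the non-sunflower property you force $Z\subseteq X+Y$, and then any further codeword escaping $\T=X+Y$ would collapse $X\cap Y$ and $X\cap Z$ into a single subspace, a contradiction. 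Your approach is shorter and directly establishes the corollary that follows the theorem in the paper (that an optimal non-sunflower code \emph{is} a ball), whereas the paper's counting argument gives only the numerical bound; on the other hand, the paper's method isolates the auxiliary fact that every local sunflower through a fixed codeword has size at most $q+1$.
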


\begin{proof}
Let $V$ be any $(k+1)$-subspace of $\bF_q^n$ and let $\C = \qbin{V}{k}{q}$, i.e.
$\C$ consists of all $k$-subspaces of a $(k+1)$- subspace. It is
readily verified that every two such $k$-subspaces intersect
at a $(k-1)$-subspace, and the size of $\C$ is $\qbin{k+1}{k}{q}$.

Let $\C'$ be an equidistant $(k-1)$-intersecting code in $\cG_q(n,k)$.
Let $X \in \C'$, and let $\bS_1,\bS_2$ be any two distinct sunflowers
in $\C'$ such that $Cen(\bS_1),Cen(\bS_2) \sus X$. It is easy to verify
that for every $X_1 \in \bS_1,X_2\in \bS_2$ (which implies that
$\dim(X_1 \cap X_2)=k-1$) we have that $|X_1 \cap X_2 \cap X|=q^{k-2}-1$
which implies that $|X_1 \cap X_2 \cap X^c|=(q^{k-1}-1)-(q^{k-2}-1)=(q-1)q^{k-2}$.

We prove now that for every $X_1\in \bS_1$ and every  $Y,Z \in \bS_2$ the sets
$\cA= X_1\cap Y \cap X^c$ and $\cB= X_1\cap Z \cap X^c$
are mutually disjoint. Since $Y,Z \in \bS_2$ and $Cen(\bS_2) \sus X$,
it follows that $Y$ and $Z$ do not intersect outside $X$,
i.e. $(Y\cap Z)\cap X^c = \varnothing$. If $\cA\cap \cB \ne \varnothing$
or equivalently $X_1\cap Y \cap Z \cap X^c \ne \varnothing$ then
$Y \cap Z \cap X^c \ne \varnothing$, a contradiction.

Therefore, in the set $\{ X_1 \cap Y \cap X^c ~\vert~ Y \in \bS_2 \}$
there can be at most $(q^k-q^{k-1})/(q-1)q^{k-2}=q$
disjoint subsets of size $(q-1)q^{k-2}$. Hence, the size of any sunflower
other than $\bS_1$ is at most $q+1$. However, all of the above arguments
are applicable for any initial sunflower $\bS_1$. Therefore, each sunflower
whose center is inside $X$ have at most $q+1$ codewords, including $X$ itself.
$X$ may have at most $\qbin{k}{k-1}{q}$ sunflower centres
inside it, which yields that:
\[
|\C'| \le 1+q\cdot \qbin{k}{k-1}{q} = \qbin{k+1}{k}{q} = |\C|~.
\]
Thus, the claim in the theorem follows.
\end{proof}

\begin{corollary}
An optimal non-sunflower equidistant $(k-1)$-intersecting code in
$\cG_q(n,k)$, $n\ge k+1$ consists of all $k$-subspaces of any
given $(k+1)$-subspace. This code is a trivial code which
satisfies the second condition, i.e., it is a ball.
\end{corollary}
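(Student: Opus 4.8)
The plan is to prove the stronger structural statement that \emph{every} non-sunflower $(k-1)$-intersecting equidistant code $\C'$ in $\cG_q(n,k)$ is contained in the set $\qbin{U}{k}{q}$ of all $k$-subspaces of some fixed $(k+1)$-subspace $U$. Once this inclusion is established, optimality closes the argument at once: by the preceding theorem an optimal such code has $\qbin{k+1}{k}{q}$ subspaces, and the ball $\qbin{U}{k}{q}$ has exactly $\qbin{k+1}{k}{q}$ elements, so $\C'\subseteq\qbin{U}{k}{q}$ forces equality. Thus $\C'=\qbin{U}{k}{q}$, and since the $k$-subspaces of $U$ span $U$ we have $\Span{\C'}=U$ of dimension $k+1$, which is precisely the second triviality condition, i.e.\ $\C'$ is a ball.

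The geometric heart of the argument is a rigidity lemma, a $q$-analogue of the classical fact that three pairwise-intersecting lines which are not concurrent must lie in a common plane. I would prove: for distinct codewords $X,Y,Z$, if $Z\not\subseteq\Span{X,Y}$ then $X\cap Y=X\cap Z=Y\cap Z$. This is a pure dimension count. Since $\dim(X\cap Y)=k-1$, the span $U_0=\Span{X,Y}$ has dimension $k+1$; if $Z\not\subseteq U_0$ then $\dim(Z+U_0)\geq k+2$, whence $\dim(Z\cap U_0)\leq k-1$. On the other hand $Z\cap X\subseteq Z\cap U_0$ with $\dim(Z\cap X)=k-1$, so $\dim(Z\cap U_0)=k-1$ and therefore $Z\cap X=Z\cap U_0=Z\cap Y$. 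This common $(k-1)$-space lies in both $X$ and $Y$, hence equals $X\cap Y$ by dimension.

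Next I would extract a non-concurrent triple from the non-sunflower hypothesis. If no codeword met two others in distinct $(k-1)$-subspaces, then each $X$ would carry a fixed $W_X$ with $X\cap Y=W_X$ for every $Y\neq X$; computing $X\cap Y$ from either side shows all the $W_X$ coincide, making $\C'$ a sunflower. Hence there exist codewords $X,Y,Y'$ with $X\cap Y\neq X\cap Y'$. The contrapositive of the lemma then gives $Y'\subseteq\Span{X,Y}$, so $U:=\Span{X,Y}=\Span{X,Y,Y'}$ is a $(k+1)$-space containing $X,Y,Y'$.

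It remains to show every codeword lies in $U$, and this is where I expect the argument to feel most delicate, although it falls out cleanly from the lemma. Take any $T\in\C'\setminus\{X,Y,Y'\}$ and suppose $T\not\subseteq U$. Then $T\not\subseteq\Span{X,Y}$ and $T\not\subseteq\Span{X,Y'}$, so the lemma applied to $(X,Y,T)$ gives $X\cap T=X\cap Y$, while the lemma applied to $(X,Y',T)$ gives $X\cap T=X\cap Y'$; together these force $X\cap Y=X\cap Y'$, contradicting the choice of the triple. Hence $T\subseteq U$, so $\C'\subseteq\qbin{U}{k}{q}$ and optimality finishes the proof as above. The main obstacle throughout is isolating the correct rigidity lemma and verifying that the non-sunflower property genuinely supplies a non-concurrent triple; I note that this route invokes the size bound of the preceding theorem only at the very last step, the true content being the geometric rigidity of pairwise $(k-1)$-intersecting $k$-subspaces.
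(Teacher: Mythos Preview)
Your proof is correct, and in fact it establishes more than the paper does. The paper offers no separate argument for this corollary: it is stated immediately after the theorem, and the intended content seems to be only that the explicit construction $\qbin{V}{k}{q}$ from the theorem's proof attains the optimal size $\qbin{k+1}{k}{q}$ and visibly satisfies the ball condition. Read that way, the corollary is a one-line observation about the construction.

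You instead prove the \emph{uniqueness} statement: every non-sunflower $(k-1)$-intersecting code is contained in the $k$-subspaces of a single $(k+1)$-space, so an optimal one must equal a ball. This is genuinely stronger than anything the paper proves, and your route via the rigidity lemma (if $Z\not\subseteq\Span{X,Y}$ then all three pairwise intersections coincide) is clean and self-contained. The paper's upper-bound argument in the preceding theorem is a counting argument on sunflower centres inside a fixed codeword and does not yield this structural conclusion; your dimension-count lemma is a different and more geometric mechanism. What you gain is a characterization of all optimal codes rather than merely the existence of one; what the paper's approach buys is that the size bound alone already suffices for the headline statement of the theorem without needing the rigidity step. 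One small remark: your containment $\C'\subseteq\qbin{U}{k}{q}$ holds for \emph{every} non-sunflower $(k-1)$-intersecting code, not only optimal ones, so you have actually shown that all such codes are balls (in the sense of being contained in a $(k+1)$-space), with optimality invoked only to pin down equality.
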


\subsection{Orthogonal subspaces}

A simple operation which preserves the equidistant property
of a code and its triviality in the Hamming scheme is the \emph{complement}.
Two binary words $x = (x_1, \ldots , x_n)$
and $y = (y_1, \ldots , y_n)$ are \emph{complements}
if for each $i$, $1 \leq i \leq n$, we have $x_i + y_i = 1$, i.e.,
$x_i = 0$ if and only if $y_i = 1$. We say that $y$ is the complement
of $x$ and denote it by $y=\bar{x}$. For a binary code $\bC$, the
\emph{complement} of $\bC$, $\bar{\bC}$ is defined by
$\bar{\bC} = \{ x ~\vert~ \bar{x} \in \bC \}$.
It is easily verified that if a constant weight code $\bC$
is equidistant then also its complement is constant weight
code and equidistant and if $\bC$ is trivial then also its
complement $\bar{\bC}$ is trivial.

What is the $q$-analog operation for the complement? This question
was discussed in details before~\cite{BrEtVa13}. We will use the
orthogonality as the the $q$-analog for complement.
For a constant dimension code $\C \sus \cG_q(n,k)$ we define
the \emph{orthogonal} code by
$\C^{\bot} = \{X \vert X^{\bot} \in \C\}$.
It is known~\cite{EtVa11,KoKs08,XiFu09} that for any two subspaces
$X,Y \in \F_q^n$ we have $d_S(X,Y)=d_S (X^\perp,Y^\perp)$.
This immediately implies the following result.

\begin{theorem}
If $\C$ is a $t$-intersecting equidistant code in $\cG_q(n,k)$,
then $\C^\perp$ is a $t'$-intersecting equidistant code in
$\cG_q(n,n-k)$, where $t'=n-2k+t$.
\end{theorem}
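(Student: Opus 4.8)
The plan is to reduce everything to two standard facts and then chase definitions. The first fact, quoted in the excerpt, is that orthogonal complementation is an isometry of the subspace distance: $d_S(X,Y) = d_S(X^\perp, Y^\perp)$ for all $X,Y \in \cP_q(n)$. The second is the elementary dimension formula for complements, namely $\dim X^\perp = n - \dim X$, together with the inclusion-exclusion identity $\dim(X^\perp \cap Y^\perp) = \dim(X+Y)^\perp = n - \dim(X+Y)$.

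First I would establish that $\C^\perp$ is a constant dimension code in $\cG_q(n,n-k)$. Since every $X \in \C$ has $\dim X = k$, its orthogonal $X^\perp$ has dimension $n-k$; as $\C \sus \cG_q(n,k)$, the map $X \mapsto X^\perp$ sends $\C$ into $\cG_q(n,n-k)$, and it is a bijection onto its image because $(X^\perp)^\perp = X$. Next I would show $\C^\perp$ is equidistant: because $\C$ is $t$-intersecting it is equidistant with the single pairwise distance $d = 2(k-t)$, and the isometry $d_S(X,Y) = d_S(X^\perp,Y^\perp)$ forces every pair in $\C^\perp$ to realize the same distance $d$. Hence $\C^\perp$ is equidistant as well.

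The only computation that requires care is identifying the intersection parameter $t'$, so that I can assert $\C^\perp$ is genuinely $t'$-intersecting rather than merely equidistant. Fix distinct $X,Y \in \C$ with $\dim(X \cap Y) = t$. Using $\dim(X \cap Y) + \dim(X+Y) = \dim X + \dim Y = 2k$, I get $\dim(X+Y) = 2k - t$. Then
\[
\dim\bigl(X^\perp \cap Y^\perp\bigr) = \dim\bigl((X+Y)^\perp\bigr) = n - \dim(X+Y) = n - (2k - t) = n - 2k + t,
\]
which is exactly the claimed $t'$. Since this value is independent of the chosen pair, every two codewords of $\C^\perp$ meet in a subspace of dimension $t' = n - 2k + t$, so $\C^\perp$ is $t'$-intersecting.

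I do not anticipate a real obstacle here; the statement is essentially a bookkeeping consequence of the cited isometry and the complementation identities. The subtle point worth stating explicitly is the consistency check that equidistance and the fixed intersection dimension agree: the common distance of $\C^\perp$ computed directly is $2\bigl((n-k) - t'\bigr) = 2\bigl((n-k) - (n-2k+t)\bigr) = 2(k-t) = d$, matching the distance transported from $\C$ by the isometry. This confirms that the parameter $t'$ is forced and that no hidden degeneracy (such as $t'$ being negative, which would simply signal that the given $n,k,t$ admit no such code) interferes with the conclusion.
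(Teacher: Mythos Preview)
Your proposal is correct and follows essentially the same approach as the paper: both use the cited isometry $d_S(X,Y)=d_S(X^\perp,Y^\perp)$ to transport equidistance to $\C^\perp$, and both identify $t'$ via the relation $d=2(k-t)=2((n-k)-t')$. The only cosmetic difference is that you compute $\dim(X^\perp\cap Y^\perp)$ directly from $(X+Y)^\perp=X^\perp\cap Y^\perp$, whereas the paper reads $t'$ off the distance equation; these are equivalent one-line computations.
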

\begin{proof}
Clearly, $\C^\perp$ is a code in $\cG_q(n,n-k)$. Furthermore,
$\C$ and $\C^\perp$ have the same minimum subspace
distance $d$ which is also the distance
between any two codeword of $\C$ and between any two codewords
of $\C^\perp$. Therefore, $d=2(k-t)=2(n-k-t')$ which implies
$t'=n-2k+t$.
\end{proof}

\begin{corollary}
If $\C$ a 0-intersecting code in $\cG_q(n,k)$ (a partial spread),
then $\C^\perp$ is an $(n-2k)$-intersecting code in
$\cG_q(n,n-k)$.
\end{corollary}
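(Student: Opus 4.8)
The plan is to obtain this statement as an immediate specialization of the preceding theorem rather than reprove anything from scratch. First I would observe that a partial spread in $\cG_q(n,k)$ is, by the definition given in Section~\ref{sec:spread}, a collection of pairwise disjoint $k$-subspaces, meaning that any two distinct codewords meet in the null space $\{ \mathbf{0} \}$. Since $\{ \mathbf{0} \}$ is the unique $0$-dimensional subspace, a partial spread is precisely a $0$-intersecting equidistant code. Thus the only thing to do is instantiate the theorem at $t = 0$.

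With $t = 0$, the preceding theorem asserts that $\C^\perp$ is a $t'$-intersecting equidistant code in $\cG_q(n,n-k)$ with $t' = n - 2k + t = n - 2k$. This is exactly the claimed conclusion, so no further computation is required; the result follows in one line from the subspace-distance-preserving property $d_S(X,Y) = d_S(X^\perp, Y^\perp)$ that already powered the theorem.

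I do not expect any genuine obstacle here. The single point worth checking is that $t' = n - 2k$ is a legitimate, non-negative intersection dimension. This is automatic as soon as the partial spread contains at least two codewords: two disjoint $k$-subspaces together span a $2k$-dimensional subspace of $\F_q^n$, which forces $2k \le n$, i.e. $n - 2k \ge 0$. Hence the corollary is a routine instantiation and needs only the observation that ``partial spread'' and ``$0$-intersecting code'' are the same notion.
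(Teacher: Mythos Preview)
Your proposal is correct and matches the paper's approach: the corollary is stated without proof in the paper, as it is just the immediate specialization $t=0$ of the preceding theorem. Your additional remark that $n-2k\ge 0$ is forced once the partial spread has at least two codewords is a nice sanity check, though the paper does not bother to mention it.
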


\begin{corollary}
The smallest possible $n$ for which a $t$-intersecting code exists
in $\cG_q(n,k)$ is $n=2k-t$. The size of the largest such code is the
size of the largest partial spread in $\cG_q(2k-t,k-t)$.
\end{corollary}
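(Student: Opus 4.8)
The plan is to separate the statement into two parts: the lower bound $n \ge 2k-t$ governing when a (nontrivial) $t$-intersecting code can exist, and the identification of the largest code at the boundary value $n=2k-t$ via the orthogonality operation introduced above.

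First I would establish the lower bound from the dimension formula. A $t$-intersecting code is only meaningful when it has at least two codewords, so take two distinct codewords $X,Y$ with $\dim(X\cap Y)=t$. Then
$$
\dim(X+Y)=\dim X+\dim Y-\dim(X\cap Y)=2k-t,
$$
and since $X+Y\subseteq\F_q^n$ this forces $n\ge 2k-t$. Hence no $t$-intersecting code with more than one codeword exists for $n<2k-t$, and $n=2k-t$ is the smallest admissible dimension.

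Next I would analyze the boundary case $n=2k-t$ and pin down the largest size by invoking the orthogonality theorem, whose crucial feature is that the map $\C\mapsto\C^\perp$ is an involution and hence a size-preserving bijection. Substituting $n=2k-t$, that theorem sends a $t$-intersecting code in $\cG_q(2k-t,k)$ to a $t'$-intersecting code in $\cG_q(2k-t,k-t)$ with
$$
t'=n-2k+t=(2k-t)-2k+t=0,
$$
i.e. to a partial spread in $\cG_q(2k-t,k-t)$. The preceding corollary on partial spreads gives the reverse direction, so orthogonality is a cardinality-preserving bijection between the two families. Consequently the largest $t$-intersecting code in $\cG_q(2k-t,k)$ has exactly the size of the largest partial spread in $\cG_q(2k-t,k-t)$. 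This also settles nonvacuous existence: two disjoint $(k-t)$-subspaces always exist in $\F_q^{2k-t}$ (since $2(k-t)\le 2k-t$), so a partial spread, and hence a $t$-intersecting code, of size at least two does exist at $n=2k-t$.

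I do not anticipate a real obstacle, as the corollary is essentially a repackaging of the dimension formula together with the orthogonality theorem already proved. The only points demanding care are confirming that $\C\mapsto\C^\perp$ is a genuine bijection, so that maximizing over one family is equivalent to maximizing over the other, and checking that the associated partial spread is nontrivial; both are immediate.
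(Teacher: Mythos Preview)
Your proposal is correct and matches the paper's implicit reasoning: the corollary is stated without proof, immediately after the orthogonality theorem and the preceding corollary on partial spreads, precisely because it follows by the dimension-formula lower bound together with the size-preserving involution $\C\mapsto\C^\perp$ specialized to $n=2k-t$. There is nothing to add.
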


The next question we would like to answer is whether the orthogonal
code of a trivial code is also a trivial code. The answer is that usually,
the orthogonal code of a trivial code is not a trivial code. The only exception
is given in the following theorem whose proof is easy to verify.

\begin{theorem}
$\C$ is an optimal $(k-1)$-intersecting equidistant code in $\cG_q(n,k)$ (a ball)
if and only if~$\C^\perp$
is a $(n-k-1)$-intersecting sunflower in $\cG_q(n,n-k)$.
\end{theorem}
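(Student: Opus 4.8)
The plan is to prove the equivalence by characterizing both sides explicitly and using the orthogonality correspondence established earlier in this subsection, together with the previous corollary identifying optimal $(k-1)$-intersecting codes as balls. By the corollary preceding this theorem, $\C$ is an optimal $(k-1)$-intersecting equidistant code in $\cG_q(n,k)$ if and only if $\C$ is a ball, i.e. $\C = \qbin{V}{k}{q}$ for some $(k+1)$-subspace $V$ of $\F_q^n$. So the task reduces to showing that the orthogonal of a ball is precisely a sunflower of the stated intersection parameter, and conversely.

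First I would compute $\C^\perp$ directly when $\C$ is a ball. If $\C = \qbin{V}{k}{q}$ with $\dim V = k+1$, then each codeword $X \in \C$ is a $k$-subspace with $X \sus V$, so its orthogonal $X^\perp$ is an $(n-k)$-subspace satisfying $V^\perp \sus X^\perp$, since $X \sus V$ implies $V^\perp \sus X^\perp$. Here $\dim V^\perp = n-k-1$. Conversely, every $(n-k)$-subspace containing the fixed $(n-k-1)$-subspace $V^\perp$ arises this way, because the correspondence $X \mapsto X^\perp$ is an inclusion-reversing bijection between the $k$-subspaces of $V$ and the $(n-k)$-subspaces containing $V^\perp$. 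Thus $\C^\perp = \{ W \in \cG_q(n,n-k) ~\vert~ V^\perp \sus W \}$, which is exactly a sunflower with center $Cen(\C^\perp) = V^\perp$ of dimension $n-k-1$. Since all these codewords share the common $(n-k-1)$-subspace $V^\perp$ and pairwise intersect in nothing larger (two distinct such $(n-k)$-subspaces containing $V^\perp$ meet in exactly $V^\perp$), $\C^\perp$ is an $(n-k-1)$-intersecting sunflower, as claimed.

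For the converse direction I would run the argument in reverse. Suppose $\C^\perp$ is an $(n-k-1)$-intersecting sunflower in $\cG_q(n,n-k)$ with some center $Z$ of dimension $n-k-1$. Every codeword $W \in \C^\perp$ is an $(n-k)$-subspace containing $Z$, and taking orthogonals gives $W^\perp \sus Z^\perp$ with $\dim Z^\perp = k+1$; so setting $V := Z^\perp$, each $W^\perp$ is a $k$-subspace of the $(k+1)$-subspace $V$, whence $\C \sus \qbin{V}{k}{q}$. To conclude that $\C$ is the full ball (and hence optimal), I would invoke the previous theorem on orthogonal codes, which guarantees that $\C$ is itself $(k-1)$-intersecting with the same cardinality as $\C^\perp$, so if $\C^\perp$ is a maximal sunflower of this type then $|\C| = \qbin{k+1}{k}{q}$ forces $\C = \qbin{V}{k}{q}$.

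The main obstacle, and really the only delicate point, is bookkeeping the dimension and intersection parameters consistently through the orthogonality, since the theorem immediately preceding only tracks $t' = n - 2k + t$ at the level of intersection numbers, not the sunflower/ball structure. The structural claim — that orthogonality carries the ``containing a fixed small subspace'' configuration (sunflower) to the ``contained in a fixed large subspace'' configuration (ball) and vice versa — is what must be verified carefully, but it follows cleanly from the inclusion-reversing bijectivity of $X \mapsto X^\perp$ on subspaces of $\F_q^n$. Everything else is routine verification, which is why the theorem statement flags the proof as easy.
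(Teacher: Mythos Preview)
The paper does not actually supply a proof of this theorem; it only asserts that the result is ``easy to verify.'' Your approach --- using the inclusion-reversing bijection $X \mapsto X^\perp$ to carry the ball $\qbin{V}{k}{q}$ to the full sunflower of $(n-k)$-subspaces containing $V^\perp$, and vice versa --- is exactly the natural argument and is correct.

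One remark on the converse direction: you rightly observe that from ``$\C^\perp$ is an $(n-k-1)$-intersecting sunflower'' alone one only gets $\C \subseteq \qbin{Z^\perp}{k}{q}$, and you need maximality of the sunflower to conclude $\C$ is the full ball. The theorem as stated in the paper does not explicitly say ``optimal'' or ``maximal'' on the sunflower side, so strictly speaking the backward implication fails for proper sub-sunflowers (whose orthogonals are proper sub-balls, hence not optimal). This is a minor imprecision in the paper's statement rather than a defect in your argument; the intended reading is evidently that both sides are the maximal objects of their type, which is what your proof establishes. You handled this carefully and flagged it appropriately.
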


\section{Constructions of Large Non-Sunflower Equidistant Codes}
\label{sec:construction}

In this section, we will consider the
size of the largest non-sunflower equidistant code in $\cG_q(n,k)$
and in $\cP_q(n)$. The main result which will be presented in
subsection~\ref{sec:Plucker} is a construction of 1-intersecting
codes in $\cG_q(n,k)$,
$n \geq \binom{k+1}{2}$, whose size is $\sbinomq{k+1}{1}$.
This construction will be based on
the Pl\"{u}cker embedding. In subsection~\ref{sec:larger}
we will show the only example we have found (by computer search)
for which this construction is not optimal.
In subsection~\ref{sec:the_largest} we will consider the largest
non-sunflower $t$-intersecting codes in $\cG_q(n,k)$ based
on our discussion. In subsection~\ref{sec:projective} we will
consider the size of the largest equidistant code in the whole
the projective space $\cP_q(n)$.

\subsection{Construction using the Pl\"{u}cker embedding}
\label{sec:Plucker}

Let $[n]$ denote the set $\{1,2,\ldots,n\}$, let $e_i$ denote
the unit vector (of a given length) with a 1 in the
$i$th coordinate, and
for any $0 < k\le n$, let $n_k= {n \choose k}$.
We denote by $\bP_q^{\ell-1}$ the set of \textit{projective points}
of $\bF_q^\ell$. The set $\bP_q^{\ell-1}$
is commonly referred to as the set $\nicefrac{\bF_q^{\ell}}{\sim}$
where $\sim$ is an equivalence relation over $\bF_q^\ell \setminus \{0\}$ defined as
\[
x\sim y \iff \exists \lambda \in \bF_q\setminus \{0\},x=\lambda y ~.
\]
It is widely
known~\cite{BR98-PgBook,RST13-onTheGeometry,RT12-Decoding} that $\cG_q(n,k)$ can be
embedded in $\bP_q^{n_k-1}$ using the Pl\"{u}cker embedding,
denoted by $P$.

Given $U\in \cG_q(n,k)$, let $M(U)\in \bF_q^{k\times n}$
be some $k\times n$ matrix over $\bF_q$ whose row span is $U$.
Given any $k$-subset $\{i_1,\ldots,i_k\}$ of $[n]$,
let $M(U)(i_1,\ldots,i_k)$ be the $k\times k$ sub-matrix of $M(U)$
consisting of columns $i_1,\ldots,i_k$. Consider the $n \choose k$
coordinates of $\bF_q^{n_k}$ as numbered by $k$-subsets of $[n]$,
and for each $k$-subset $\{i_1,\ldots,i_k\}$ of $[n]$ assume
w.l.o.g that $i_1<\ldots<i_k$. The function $P$ maps
a subspace $U\in \cG_q(n,k)$ to the equivalence
class of the vector $v(U)\in \bF_q^{n_k}$, defined as:
\[
(v(U))_{\{i_1,\ldots,i_k\}} = \det M(U)(i_1,\ldots,i_k).
\]
Namely, the coordinate $\{i_1,\ldots,i_k\}$ of the vector $v(U)$
is the determinant of the sub-matrix of $M(U)$ consisting
of columns $i_1<\ldots<i_k$ of $M(U)$. Formally, the
function $P$ is defined as $P(U) = [v(U)]$, where $[v(U)]$
is the equivalence class of the vector $v(U)$ under the
equivalence relation $\sim$ defined earlier in this section.
It is worth mentioning that the function $P$ is well-defined,
as any choice of a matrix $M(U)$ whose row span is $U$ will result
in the same equivalence class in $\bP_q^{n_k-1}$~\cite{BR98-PgBook}. In this paper,
in order to maintain consistency with coding theory terminology,
we identify the set $\bP_q^{\ell-1}$ by the set of
all 1-subspaces of $\F_q^{\ell}$, namely, $\cG_q(\ell,1)$.

Another concept we use in this section is Steiner systems.
A \emph{Steiner system} $S(t,k,n)$ is a pair $(Q,B)$ where $Q$ is
an $n$-set of elements (called \emph{points}) and $B$ is a collection
of $k$-subsets of $Q$ (called \emph{blocks}), such that every
$t$-subset of $Q$ is contained in exactly one block of $B$.
A Steiner system can be described by its incidence matrix. This
is a matrix $A=(a_{ij}),i\in [n],j\in [b], b= |S(t,k,n)|$,
where if $Q=\{q_1,\ldots,q_n\}$ and $B=\{B_1,\ldots,B_b\}$ we have:
\[
a_{ij} = \left\{ \begin{array}{ll}
         1 & \mbox{if $q_i \in B_j$}\\
         0 & \mbox{if $q_i \notin B_j$}\end{array} \right ..
\]

The following lemma, which is a key in the construction which follows, can be easily verified.

\begin{lemma} \label{lemma:KeyLemma}
The rows of the matrix $A$ defined by a Steiner system $S(2,k,n)$
form an 1-intersecting equidistance code.
The weight of each codeword in this code is $\frac{n-1}{k-1}$.
\end{lemma}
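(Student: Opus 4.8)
The plan is to verify the two claims separately, both following immediately from the single defining property of the Steiner system $S(2,k,n)$: every pair of points lies in exactly one block. No additional structure is needed.

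First I would address the $1$-intersecting property. Each row of $A$ corresponds to a point $q_i \in Q$, and its support is exactly the set of blocks containing $q_i$. For two distinct points $q_i, q_{i'}$, the number of coordinates in which both rows carry a $1$ equals the number of blocks containing both $q_i$ and $q_{i'}$. Since $\{q_i,q_{i'}\}$ is a $2$-subset of $Q$, the definition of $S(2,k,n)$ guarantees it lies in exactly one block, so the two rows intersect in precisely one coordinate. This establishes that the rows form a $1$-intersecting code.

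Next, for the weight, I would compute the replication number $r$ of an arbitrary point $q \in Q$ by a standard double count of the pairs $\{q,q'\}$ with $q' \ne q$. On one hand there are $n-1$ such pairs. On the other hand, each of the $r$ blocks through $q$ contains $k-1$ points other than $q$, contributing $k-1$ pairs through $q$; because every pair lies in exactly one block, the pair-sets coming from distinct blocks through $q$ are disjoint and together exhaust all pairs through $q$. This yields $r(k-1)=n-1$, hence $r=\frac{n-1}{k-1}$. As this value is independent of the chosen point, every row of $A$ has weight $\frac{n-1}{k-1}$, so the code is constant weight.

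Finally I would note that a constant weight code in which any two codewords intersect in exactly one coordinate is automatically equidistant: two codewords of weight $w$ meeting in $t$ coordinates are at Hamming distance $2(w-t)$, which here equals $2\left(\frac{n-1}{k-1}-1\right)$ for every pair. Since this distance is the same for all pairs, the code is equidistant. I do not anticipate any genuine obstacle; the only computation requiring a moment of care is the double-counting identity for the replication number, and even this is routine, which is exactly why the statement can be "easily verified" as asserted.
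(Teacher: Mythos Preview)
Your proposal is correct and is precisely the routine verification the paper alludes to; the paper itself offers no proof beyond the remark that the lemma ``can be easily verified.'' Your double-count for the replication number and the observation that constant weight plus constant intersection forces equidistance are the standard way to see this.
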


In what follows, we use the Pl\"{u}cker embedding, together with
Lemma~\ref{lemma:KeyLemma} to construct equidistant constant dimension codes.

\begin{theorem} \label{thm:PluckerConstruction}
For every integer $n \geq 3$, there exists an
$1$-intersecting equidistant code in $\cG_q(n_2,n-1)$ of size~$\qbin{n}{1}{q}$, where $n_2={n\choose 2}$.
\end{theorem}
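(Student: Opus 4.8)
The plan is to construct the desired code explicitly by combining the Pl\"{u}cker embedding with the $1$-intersecting Steiner-based code of Lemma~\ref{lemma:KeyLemma}. First I would take the Steiner system $S(2,2,n)$, which is simply the complete graph: its points are $[n]$ and its blocks are all ${n \choose 2}$ pairs $\{i,j\}$. By Lemma~\ref{lemma:KeyLemma}, the rows of its incidence matrix $A$ form a $1$-intersecting equidistant code of length $n_2 = {n \choose 2}$, with $n$ codewords each of weight $n-1$ (each point lies in exactly $n-1$ pairs). The idea is to view each such binary row as the Pl\"{u}cker image of a carefully chosen subspace, and thereby replace the $\{0,1\}$-supports by genuine $q$-ary subspace coordinates while preserving the $1$-intersecting structure.

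Next I would set up the subspaces to be embedded. For each $i \in [n]$ I would consider the $(n-1)$-subspace $U_i \in \cG_q(n,n-1)$ obtained by deleting the $i$th coordinate, i.e. $U_i = \Span{ e_j : j \neq i }$, or more usefully a one-parameter family of such hyperplanes so as to inflate the code to size $\qbin{n}{1}{q}$ rather than merely $n$. Concretely, since $\qbin{n}{1}{q} = \frac{q^n-1}{q-1}$ counts the $1$-subspaces (equivalently projective points) of $\F_q^n$, I would index codewords by projective points $[x] \in \cG_q(n,1)$ and, for each, take the hyperplane $U_{[x]} = x^{\bot}$ (or the hyperplane annihilated by the functional $x$). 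The Pl\"{u}cker embedding $P$ sends each $U_{[x]} \in \cG_q(n,n-1)$ to a point in $\bP_q^{n_2-1} = \cG_q(n_2,1)$, and the key computation is that the nonzero pattern of the Pl\"{u}cker coordinates of $U_{[x]}$ is governed exactly by the complete-graph incidence structure: the $\{i,j\}$-coordinate is the relevant $(n-1)\times(n-1)$ minor, which corresponds to the pair $\{i,j\}$ in $S(2,2,n)$.

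Then I would assemble the actual codewords of the target code in $\cG_q(n_2, n-1)$. The natural construction is to lift: for each projective point $[x]$ take an $(n-1)$-subspace of $\F_q^{n_2}$ whose Pl\"{u}cker-type description realizes the incidence row of $A$, so that the $i$th point of $[n]$ contributes its $(n-1)$ active pair-coordinates as a block. I would then verify the two required properties. For the dimension, each codeword should be an $(n-1)$-subspace of $\F_q^{n_2}$ by construction. For the $1$-intersecting property, I would compute $\dim(C_{[x]} \cap C_{[y]})$ for two distinct projective points and show it equals $1$; here I expect to reduce the subspace intersection to the overlap of the two incidence rows, which by Lemma~\ref{lemma:KeyLemma} is exactly one common pair $\{i,j\}$, and translate that single common block into a $1$-dimensional subspace intersection. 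The size count $\qbin{n}{1}{q}$ follows from indexing by projective points of $\F_q^n$ and checking that distinct points yield distinct subspaces (injectivity of the construction).

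The main obstacle I anticipate is the translation between the combinatorial $1$-intersection of the Steiner incidence rows and the genuine $q$-linear-algebraic intersection of the constructed $(n-1)$-subspaces: it is not automatic that a single shared support coordinate yields a one-dimensional intersection rather than a larger one, because subspace intersection depends on the actual field entries in the shared and unshared coordinates, not merely on the $\{0,1\}$ support. I would therefore spend most of the effort pinning down the precise field values of the Pl\"{u}cker coordinates (using the structure of the hyperplanes $U_{[x]} = x^{\bot}$ and the Laplace/cofactor expansion of the defining minors) so that the generator matrices of distinct codewords agree in a rank-one fashion on exactly the shared pair and are independent elsewhere. A secondary technical point is ensuring the embedded images genuinely span $(n-1)$-dimensional subspaces of $\F_q^{n_2}$ and that the map from projective points to codewords is injective, both of which I would settle by exhibiting explicit generator matrices and reading off their ranks.
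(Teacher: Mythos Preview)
Your proposal has the right high-level shape (combine Lemma~\ref{lemma:KeyLemma} with the Pl\"{u}cker embedding) but the specific choices are wrong in a way that prevents any construction from emerging.

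First, the Pl\"{u}cker embedding you invoke lands in the wrong space. Embedding hyperplanes $U_{[x]}\in\cG_q(n,n-1)$ gives a point in $\bP_q^{\binom{n}{n-1}-1}=\bP_q^{n-1}$, not in $\bP_q^{n_2-1}$: the Pl\"{u}cker coordinates of an $(n-1)$-subspace are indexed by $(n-1)$-subsets of $[n]$, of which there are $n$, not $n_2$. So your sentence ``the $\{i,j\}$-coordinate is the relevant $(n-1)\times(n-1)$ minor'' is not well-formed, and nothing in the proposal actually produces an $(n-1)$-dimensional subspace of $\F_q^{n_2}$. Second, the Steiner system $S(2,2,n)$ has only $n$ points, so Lemma~\ref{lemma:KeyLemma} applied to it yields a code of size $n$; the ``inflation'' to $\qbin{n}{1}{q}$ codewords by indexing over projective points is asserted but never tied to any construction, and the hyperplane idea does not supply one.

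The paper makes different choices on both fronts. It Pl\"{u}cker-embeds $\cG_q(n,2)$ (not $\cG_q(n,n-1)$), so that each projective line of $\F_q^n$ becomes a $1$-subspace of $\F_q^{n_2}$. It uses the Steiner system $S\!\left(2,\,q+1,\,\tfrac{q^n-1}{q-1}\right)$ of points and lines in $\mathrm{PG}(n-1,q)$, whose \emph{points} already number $\qbin{n}{1}{q}$, so no inflation is needed. The codeword attached to a point $V\in\cG_q(n,1)$ is
\[
P_V \;=\; \bigcup_{\substack{U\in\cG_q(n,2)\\ V\subseteq U}} P(U),
\]
the union of the Pl\"{u}cker images of all lines through $V$. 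The heart of the argument is Lemma~\ref{lemma:PluckerAreVS}, which shows that this union of $1$-subspaces is in fact an $(n-1)$-subspace of $\F_q^{n_2}$; the $1$-intersection then follows because distinct $V,W$ share exactly one line, hence $P_V\cap P_W$ is exactly the single $1$-subspace $P(\langle V,W\rangle)$. The obstacle you correctly flag (combinatorial versus linear-algebraic intersection) is thus resolved not by analyzing field entries in a support pattern, but by the structural fact that each codeword is a union of Pl\"{u}cker points and the Pl\"{u}cker map is injective.
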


The main idea of the construction in the proof of
Theorem~\ref{thm:PluckerConstruction} is to consider
a set (design) $\cS_{q,n}$ of blocks, whose blocks are the
elements of $\cG_q(n,2)$ and its set of points
is $\cG_q(n,1)$ where a point $X\in \cG_q(n,1)$
is incident with a block $Y\in \cG_q (n,2)$
if $X\sus Y$. $\cS_{q,n}$ forms a Steiner system
$S(2,q+1,\frac{q^{n}-1}{q-1})$ since every 2-subspace
contains $q+1$ points and every pair of
distinct 1-subspaces is contained in a unique 2-subspace.
We embed $\cG_q (n,2)$ into $\bP_q^{n_2-1}$ using
the Pl\"{u}cker embedding, and show that given $V \in \cG_q(n,1)$,
the set
$$
P_V = \bigcup_{\substack{{U\in \cG_q (n,2)} \\ {V\sus U}}} P(U)
$$
is the union of the Pl\"{u}cker embeddings
of subspaces in $\cG_q (n,2)$ which intersect
at $V$ and constitutes a subspace
in $\cG_q (n_2,n-1)$. Therefore, we may use
Lemma \ref{lemma:KeyLemma} to get an
1-intersecting equidistant code in $\cG_q (n_2,n-1)$ of
size $\qbin{n}{1}{q}=\frac{q^n -1}{q-1}$.
The proof of Theorem~\ref{thm:PluckerConstruction} relies on the following two lemmas.
\begin{lemma}\label{lemma:sizeOfPV}
$|P_V|=q^{n-1}$.
\end{lemma}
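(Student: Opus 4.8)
The plan is to compute $|P_V|$ directly as an incidence count, using the injectivity of the Pl\"{u}cker embedding together with the elementary fact that two distinct one-dimensional subspaces of $\F_q^{n_2}$ meet only in the zero vector. First I would count the blocks that contribute to the union, i.e.\ the two-subspaces $U \in \cG_q(n,2)$ with $V \sus U$. Fixing a nonzero $w$ with $V = \Span{w}$, each such $U$ corresponds to a line in the quotient space $\F_q^n / V$, so their number is $\qbin{n-1}{1}{q} = \frac{q^{n-1}-1}{q-1}$.

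Next I would invoke the fact that $P$ is injective (it is an embedding), so the projective points $P(U)$, for the $\frac{q^{n-1}-1}{q-1}$ subspaces $U \supseteq V$, are pairwise distinct. Under the identification of $\bP_q^{n_2-1}$ with $\cG_q(n_2,1)$, each $P(U)$ is then a distinct one-dimensional subspace (a line through the origin) of $\F_q^{n_2}$, and therefore contributes exactly $q-1$ nonzero vectors. Since two distinct lines through the origin share only the zero vector, these nonzero contributions are disjoint, and hence
$$
|P_V| = 1 + (q-1)\cdot \frac{q^{n-1}-1}{q-1} = q^{n-1}.
$$

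I do not expect a serious obstacle; the one point requiring care is the injectivity of $P$, which is precisely what guarantees that the lines $P(U)$ are genuinely distinct, so that the count does not collapse. It is worth noting that the stronger statement used elsewhere in this section --- that this union is in fact an $(n-1)$-dimensional subspace of $\F_q^{n_2}$ --- comes for free from the explicit form of the embedding: with $w$ fixed, the Pl\"{u}cker vector of $\Span{w,u}$ has coordinates $w_i u_j - w_j u_i$ and is therefore linear in $u$. The resulting linear map $\F_q^n \to \F_q^{n_2}$ has kernel exactly $V$ (as $w_i u_j = w_j u_i$ for all $i,j$ forces $u$ parallel to $w$), hence image of dimension $n-1$, whose set of nonzero vectors is precisely $P_V \setminus \{\mathbf{0}\}$. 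This viewpoint simultaneously recovers the cardinality $q^{n-1}$ and supplies the subspace structure, but for the present lemma the incidence count above already suffices.
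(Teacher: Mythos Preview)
Your proof is correct and follows essentially the same approach as the paper: count the $\frac{q^{n-1}-1}{q-1}$ two-subspaces through $V$, observe that the corresponding Pl\"{u}cker points are distinct one-dimensional subspaces (hence meet only in $0$), and conclude $|P_V| = 1 + (q-1)\cdot\frac{q^{n-1}-1}{q-1} = q^{n-1}$. Your final paragraph on the linear map $u \mapsto (w_i u_j - w_j u_i)_{\{i,j\}}$ is a nice additional observation that anticipates the content of the next lemma in the paper, but it goes beyond what is needed here.
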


\begin{proof}
Note that the number of 2-subspaces that contain a given 1-subspace is $\frac{q^{n-1}-1}{q-1}$. Since each two distinct 1-subspaces $P(U_1),P(U_2)$ itersect trivially, it follows that $|P_V| = \frac{q^{n-1}-1}{q-1}\cdot (q-1) +1=q^{n-1}$.
\end{proof}

\begin{lemma} \label{lemma:PluckerAreVS}
If $V\in \cG_q (n,1)$ then $P_V \in \cG_q (n_2,n-1)$.
\end{lemma}

\begin{proof}
Let $v$ be an arbitrary nonzero vector in $V$. Let $r$ be an arbitrary
index such that $v_r \ne 0$ and let $\cB_v = \{z^1,\ldots,z^{n-1}\}$ be the set of $n-1$ distinct unit vectors of
length $n$ such that $e_r \notin \cB_v$.
If $Z_i = \Span{\{v,z^i\}},1\le i\le n-1,$ then by the definition
of Pl\"{u}cker embedding, the projective point $P(Z_i)$, considered
as an 1-subspace of $\bF_q^{n_2}$, is the span of the vector $p^i$ of length $n_2$, whose coordinates are indexed by the subsets $\{s,t\}\subseteq[n]$ and are defined by
\begin{eqnarray}
p^i_{\{s,t\}}= \det_{(s,t)}{v \choose z^i}=v_s z^i_t-v_t z^i_s \label{eqn:DefnPi}.
\end{eqnarray}
Recall that the coordinates of~$\bF_q^{n_2}$ are identified
with 2-subsets of $[n]$ as mentioned earlier in this section.

We will now prove that $\Span{\{p^1,\ldots,p^{n-1}\}}=P_V$,
and since $|P_V| = q^{n-1}$ by Lemma \ref{lemma:sizeOfPV}, it implies that $P_V$ is an $(n-1)$-subspace of~$\bF_q^{n_2}$.

Let $x= \sum_{i\in [n-1]} a_ip^i$ where $a_i\in \bF_q$ for all $i\in[n-1]$.
We will show that $x\in P_V$, i.e. that there exists
a $\tilde{Z} \in \cG_q (n_2 ,2)$ such that $V\sus \tilde{Z}$
and $x \in P(\tilde{Z})$. As a consequence, we will have that $\Span{\{p^1,\ldots,p^{n-1}\}}\sus P_V$.
Let
\begin{align}
\tilde{z} & =  \sum_{i\in [n-1]} a_iz^i. \label{eqn:DefnWTilde}
\end{align}
Clearly, $V\sus \Span{\{v,\tilde{z}\}}$, and since
$\tilde{z}_r = 0,v_r \ne 0$ it follows that $\Span{\{v,\tilde{z}\}}$
is a 2-subspace of $\bF_q^n$. By the definition of
the Pl\"{u}cker embedding, $P(\Span{\{v,\tilde{z}\}})$ is
the span of the vector $\tilde{x}$, whose coordinates are indexed by the subsets $\{s,t\}\subseteq[n]$ and are defined by
\begin{align*}
\tilde{x}_{\{s,t\}}& = \det _{(s,t)} {v \choose \tilde{z}}\\
&=v_s \tilde{z}_t - v_t \tilde{z}_s \\
&=v_s \left(\sum _{i\in[n-1]}a_i z^i_t\right) - v_t \left(\sum_{i\in [n-1]} a_i z^i_s\right)&\text{by (\ref{eqn:DefnWTilde})} \\
&= \sum_{i\in [n-1]}a_i\left(v_s z^i_t-v_t z^i_s\right)\\
&= \sum_{i\in[n-1]}a_i p^i_{\{s,t\}}&\text{by (\ref{eqn:DefnPi})}\\
&=x_{\{s,t\}}&\text{by the definition of $x$}
\end{align*}
Hence $\tilde{x}=x$ and therefore $\Span{\{p^1,\ldots,p^{n-1}\}}\sus P_V$.

Now, let $x \in P_V$ and let $U_x\in \cG_q(n,2)$ be the
unique $2$-subspace such that $V \sus U_x$ and $x\in P(U_x)$.
Hence, $U_x =  \Span{\{v,z\}}$ for some $z\in \bF_q^n$. By the definition
of $\cB_v = \{z^1,\ldots,z^{n-1}\}$ every vector whose $r$th entry is 0 is in~$\Span{\cB_v}$.
If $z_r \ne 0$ then we define
\[
z' = v-\left(\frac{v_r}{z_r}\right)\cdot z\in U_x.
\]
Clearly, $z'_r=0$ and hence $U_x = \Span{\{v,z'\}}$. Thus, w.l.o.g we assume
$z_r=0$ and we may write ${z = \sum _{i\in [n-1]} a_i z^i}$ for
some $a_i \in \bF_q$, $i\in[n-1]$. Since
$U_x = \Span{\{v,z\}}$ it follows that the 1-subspace $P(U_x)$ is spanned
by the vector $\left( \det _{(s,t)}{v \choose z}\right) _{\{s,t\}\sus[n]}$.
Hence, there exists some $\lambda \in \bF_q$ such that:
\begin{align*}
x_{\{s,t\}} & =  \lambda \cdot \det _{(s,t)} {v \choose z}=\lambda (v_s z_{t}- v_t z_{s}) \\
&= \lambda \left( v_s \cdot \sum_{i\in [n-1]}a_i z^i_t-v_t \cdot \sum_{i\in [n-1]}a_i z^i_s\right)&\text{since $z = \sum _{i\in [n-1]} a_i z^i$}\\
&= \lambda  \sum_{i\in [n-1]}a_i \left(v_s z^i_t-v_t z^i_s \right)\\
&=\lambda \sum_{i\in [n-1]}a_i p^i_{\{s,t\}}&\text{by (\ref{eqn:DefnPi})}
\end{align*}
Therefore $x \in \Span{\{p^1,\ldots,p^{n-1}\}}$ which implies that $P_V\sus \Span{\{p^1,\ldots,p^{n-1}\}}$. Thus we have proved that $P_V = \Span{\{p^1,\ldots,p^{n-1}\}}$, and as a consequence we have that ${P_V \in \cG_q (n_2 ,n-1)}$.
\end{proof}

\begin{proof}
(of Theorem \ref{thm:PluckerConstruction}) Let $\C\sus \cG_q(n_2 ,n-1)$ be the code defined by
\[
\C = \{P_V ~\vert~ V\in \cG_q(n,1) \}.
\]
By Lemma~\ref{lemma:PluckerAreVS}, for each
$V \in \cG_q(n,1)$ we have that $P_V$ is an $(n-1)$-subspace
of $\bF_q^{n_2}$ and hence $\C$ is well-defined.
By Lemma~\ref{lemma:KeyLemma} and the discussion on $\cS_{q,n}$,
it follows that $\C$ is an 1-intersecting equidistant code.
\end{proof}

\begin{remark}
After the paper was written, we found that Lemma~\ref{lemma:PluckerAreVS}
can also be obtained as a consequence of~\cite[Theorem 24.2.9, p.113]{hir91}
which discuss the theory of finite projective geometries. But, the proof of
this Theorem requires more detailed theory which precedes it, while our
proof is much shorter, simpler, and direct.
\end{remark}

\subsection{Do larger equidistant codes exist?}
\label{sec:larger}

It was believed (Conjecture~\ref{con:largest}) that the largest
non-sunflower 1-intersecting code in $\cG_q(n,k)$ has size at most
$\sbinomq{k+1}{1}$. The following example consists
of a non-sunflower $1$-intersecting
code $\C \sus \cG_2(6,3)$ of size 16, while $\sbinomtwo{4}{1}=15$.
$\C$ was found by a computer search.

Let $\alpha$ be a primitive root of $x^6+x+1$, and use this
primitive polynomial to generate $\bF_2^6$. Let $\C$ be the
code which consists of the following sixteen 3-subspaces:

\begin{equation*}
\begin{split}
&\Span{\{\ga^{0},\ga^{1},\ga^{2}\}}\\
&\Span{\{\ga^{0},\ga^{15},\ga^{10}\}}\\
&\Span{\{\ga^{6},\ga^{52},\ga^{51}\}}\\
&\Span{\{\ga^{12},\ga^{54},\ga^{15}\}}\\
\end{split}
\hspace{15 mm}
\begin{split}
&\Span{\{\ga^{10},\ga^{26},\ga^{25}\}}\\
&\Span{\{\ga^{18},\ga^{1},\ga^{59}\}}\\
&\Span{\{\ga^{33},\ga^{20},\ga^{59}\}}\\
&\Span{\{\ga^{49},\ga^{26},\ga^{46}\}}\\
\end{split}
\hspace{15 mm}
\begin{split}
&\Span{\{\ga^{12},\ga^{9},\ga^{29}\}}\\
&\Span{\{\ga^{25},\ga^{0},\ga^{58}\}}\\
&\Span{\{\ga^{41},\ga^{2},\ga^{36}\}}\\
&\Span{\{\ga^{36},\ga^{34},\ga^{30}\}}\\
\end{split}
\hspace{15 mm}
\begin{split}
&\Span{\{\ga^{6},\ga^{5},\ga^{33}\}}\\
&\Span{\{\ga^{19},\ga^{10},\ga^{6}\}}\\
&\Span{\{\ga^{58},\ga^{6},\ga^{49}\}}\\
&\Span{\{\ga^{36},\ga^{29},\ga^{26}\}}\\
\end{split}
\end{equation*}

\subsection{Large non-sunflower equidistant codes}
\label{sec:the_largest}

In this subsection, we will consider the construction of the
largest $t$-intersecting codes in $\cG_q(n,k)$. For~$n$ large
enough this code is a sunflower and hence for such large $n$ we will
consider also the largest $t$-intersecting code which is not a sunflower.

By Theorem~\ref{thm:sunflower_bound}, sunflowers are the
largest constant dimension equidistant codes when the ambient
space is large enough. The size of the largest sunflower is
usually not known, but we know that it is equal to the
size of a related partial spread. Therefore, we would like to
know what is the size of the largest $t$-intersecting code
in $\cG_q(n,k)$ which is not a sunflower.

Assume we want to generate a $(k-r)$-intersecting code
in $\cG_q(n,k)$. Clearly we must have $n \geq k+r$.
If $k-r=0$ then any $(k-r)$-intersecting code is a partial spread
and hence also a sunflower. Therefore, we assume that
$k-r >0$.
We start with the largest partial spread $\bS$ in $\cG_q(k+r,r)$.
By Theorem~\ref{thm:lowerk_2k}, its size $m$ is at least $q^k +1$.
$\bS^\perp$ is a non-sunflower $(k-r)$-intersecting code in
$\cG_q(k+r,k)$ whose size~$m$
is at least $q^k+1$. If $k-r > 1$ then we don't know
how to construct a larger code. If $k-r=1$ then larger codes
of size $\frac{q^{k+1}-1}{q-1}$
are constructed in subsection~\ref{sec:Plucker}.

\subsection{Equidistant codes in $\cP_q(n)$}
\label{sec:projective}

So far we have considered only constant dimension equidistant codes.
Can we get larger unrestricted subspace equidistant codes over $\F_q^n$ than
constant dimension equidistant codes over $\F_q^n$? We start by
considering first equidistant codes in the Hamming scheme.
Let $B_q(n,d)$ be the maximum size of an equidistant code of length~$n$
and minimum Hamming distance $d$ over $\bF_q$. Let $B_q(n,d,w)$ be
the maximum size of an equidistant code of length $n$,
constant weight $w$, and minimum Hamming distance $d$.
The following result, due to~\cite{Wei03-OnEquidistant} shows that when
discussing equidistant codes in the Hamming scheme,
we may restrict our attention to constant weight codes:

\begin{theorem}
\label{thm:equiequi}
$B_q(n,d)=1+B_q(n,d,d)$.
\end{theorem}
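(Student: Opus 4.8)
The plan is to prove the two inequalities separately, the single tool being the translation invariance of the Hamming distance over $\F_q$, namely $d_H(x-v,y-v)=d_H(x,y)$ for all $x,y,v\in\F_q^n$.

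First I would establish the easy direction $B_q(n,d)\ge 1+B_q(n,d,d)$. Take an optimal constant-weight equidistant code $\bC$ of length $n$, weight $d$, and minimum Hamming distance $d$, so that $|\bC|=B_q(n,d,d)$, and adjoin the all-zero word $\mathbf{0}$. Since every codeword of $\bC$ has weight $d$, its Hamming distance to $\mathbf{0}$ equals its weight, which is exactly $d$; combined with the equidistance of $\bC$ itself, this shows that $\bC\cup\{\mathbf{0}\}$ is equidistant with distance $d$ and has size $1+B_q(n,d,d)$, giving the bound.

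For the reverse inequality $B_q(n,d)\le 1+B_q(n,d,d)$, take an optimal equidistant code $\bC$ of length $n$ and distance $d$ with $|\bC|=B_q(n,d)$, \emph{not} assumed to be of constant weight. Choose any codeword $c_0\in\bC$ and translate the whole code by $-c_0$. By translation invariance, the shifted code $\bC-c_0$ is again equidistant with distance $d$ and has the same cardinality, and now it contains $\mathbf{0}$. Consequently every other word of $\bC-c_0$ has Hamming distance exactly $d$ from $\mathbf{0}$, hence weight exactly $d$, so $(\bC-c_0)\setminus\{\mathbf{0}\}$ is a constant-weight equidistant code of weight $d$ and distance $d$, whose size is at most $B_q(n,d,d)$. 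Therefore $B_q(n,d)=|\bC|\le 1+B_q(n,d,d)$, and combining the two inequalities yields the claimed equality.

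I do not anticipate a serious obstacle here: the only point requiring care is the verification that translating an arbitrary equidistant code by a fixed vector preserves both its pairwise distances and its cardinality, which is immediate from the translation invariance of the Hamming metric. Once one normalizes an arbitrary equidistant code so that it contains the zero word, the constant-weight structure of the remaining codewords is forced, and the whole statement collapses to a one-line counting argument.
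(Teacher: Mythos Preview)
Your argument is correct and is precisely the standard translation-trick proof of this fact. Note, however, that the paper does not actually supply its own proof of this theorem: it is stated with attribution to \cite{Wei03-OnEquidistant} and no argument is given. So there is nothing in the paper to compare against beyond observing that your proof is the expected one.
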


A related $q$-analog theorem might hold in some cases, but generally
it does not hold as demonstrated in the following example. The example
is specific in some sense, but it can be generalized to many other parameters.

Let $n$ be an odd integer for which the largest 2-intersecting equidistant code
in $\cG_2(n,4)$ with minimum subspace distance 4 is a sunflower. The size of
the largest partial 2-spread in $\cG_2 (n-2,2)$ is $\frac{2^{n-2}-4}{3}$. Let $\C$ be
such a partial spread. Clearly, $\cE^2 (\C)$ is the largest 2-intersecting
equidistant code in $\cG_2(n,4)$. Let $x$, $y$, $z$, and $u$ be the only nonzero vectors
of $\F_2^{n-2}$ which do not appear in any 2-subspace of $\C$.
Let $v_1 = ( {\bf 0} , 01)$, $v_2 = ( {\bf 0} , 10)$ be two vectors
in $\F_2^n$ and let ${\cE^2 (\C) = \{ \Span{ (X,00) \cup \{ v_1,v_2 \} } ~\vert~ X \in \C \}}$,
where ${(X,00) = \{ (x,00) ~\vert~ x \in X \}}$.
The code
$$
\C' = \cE^2 (\C) \bigcup \{ \Span{ ( {\bf 0} , 01), (y,11) } , \Span{ ( {\bf 0} , 10), (z,11) } , \Span{ ( {\bf 0} , 11), (x,11) } \}
$$
is an equidistant code in $\cP_2(n)$ whose size is $\frac{2^{n-2}+5}{3}$
and its subspace distance is 4. This code is larger than $\cE^2 (\C)$,
which implies that $q$-analog of Theorem~\ref{thm:equiequi} does not exist
in general.

%

\section{Equidistant Rank Metric Codes}
\label{section:eqRMC}

In this section, we present a connection between the construction
presented in Section \ref{sec:Plucker} and equidistant rank-metric codes.
To the best of our knowledge, this is the first construction of
an equidistant rank metric code whose matrices are not of full rank.

In this section we use a \textit{variant} of the function $P$,
defined in Section \ref{sec:Plucker}, denoted by $\varphi$.
This variant may be considered as acting on matrices from
$\bF_{q}^{k\times n}$ rather than on $\grsmn{q}{n}{k}$,
and maps them to $\bF_{q}^{n_k}$ rather than to $\bP_q^{n_k-1}$.

\begin{definition}
Given $M\in \bF_q^{k\times n}$, identify the coordinates
of $\bF_q^{n_k}$ with $k$-subsets of $[n]$, and define
$\varphi(M)$ as a vector of length $n_k = {n \choose k}$ with:
\[
\left( \varphi(M) \right) _{\left(i_1,\ldots,i_k\right)} = \det M\left( i_1 ,\ldots,i_k\right)
\]
where $M\left( i_1 ,\ldots,i_k\right)$ is the $k\times k$ sub-matrix
of $M$ formed from columns $i_1 ,\ldots,i_k$ of $M$.
\end{definition}

For $v,u \in \bF_q^n$ denote $X_{v,u}= \varphi {v \choose u}$. For $v\in \bF_q^n \setminus\{0\}$, define:
\[
M_v =
\begin{pmatrix}
X_{v,e_1} \\
\vdots \\
X_{v,e_n}
\end{pmatrix},
\]
where $e_i$ is a unit vector of length $n$. Let
$\mathbb{M} = \Span{\{M_{e_1},\ldots,M_{e_n}\}}$.
In the rest of this section we prove
that $\mathbb{M}\setminus\{0\}$ is
an equidistant constant rank code of size $q^n-1$.

The following lemma shows that for $k=2$, the function
$\varphi$ is \textit{linear} in some sense, when one
of the vectors in the matrix it operates on is fixed.

\begin{lemma}
\label{lemma:PluckerIsLinear}
If $u,v,w \in \bF_q^n$ and $\alpha,\beta \in \bF_q$ then
$ X_{v,\alpha u+\beta w} = \alpha \cdot X_{v,u}+\beta\cdot X_{v,w}$.
Similarly, $X_{\alpha u+\beta w,v} = \alpha \cdot X_{u,v}+\beta\cdot X_{w,v}$.
\end{lemma}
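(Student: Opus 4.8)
The plan is to reduce the statement to the elementary multilinearity of the $2\times 2$ determinant in its rows, and then to verify the claimed identity one coordinate at a time. First I would recall that for $k=2$ the coordinates of $X_{v,u}=\varphi{v \choose u}\in\bF_q^{n_2}$ are indexed by the $2$-subsets $\{s,t\}\sus[n]$, and that by the definition of $\varphi$ each such coordinate is the $2\times 2$ minor
\[
(X_{v,u})_{\{s,t\}}=\det_{(s,t)}{v \choose u}=v_s u_t - v_t u_s .
\]

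Next I would fix an arbitrary coordinate $\{s,t\}$ and substitute the combination $\alpha u+\beta w$ into the second row. Expanding the resulting $2\times 2$ determinant entrywise gives
\[
(X_{v,\alpha u+\beta w})_{\{s,t\}}=v_s(\alpha u_t+\beta w_t)-v_t(\alpha u_s+\beta w_s),
\]
and regrouping the terms by the scalars $\alpha$ and $\beta$ yields $\alpha(v_su_t-v_tu_s)+\beta(v_sw_t-v_tw_s)$, which is exactly $\alpha(X_{v,u})_{\{s,t\}}+\beta(X_{v,w})_{\{s,t\}}$. Since the coordinate $\{s,t\}$ was arbitrary, the two vectors agree in every coordinate, and this establishes the first identity $X_{v,\alpha u+\beta w}=\alpha\cdot X_{v,u}+\beta\cdot X_{v,w}$.

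For the second identity I would run the identical computation with the linear combination placed in the \emph{first} row; alternatively, one observes that interchanging the two rows of the defining matrix negates every $2\times 2$ minor, so $X_{u,v}=-X_{v,u}$, and linearity in the second argument then transfers directly to linearity in the first. Either route gives $X_{\alpha u+\beta w,v}=\alpha\cdot X_{u,v}+\beta\cdot X_{w,v}$.

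I do not expect any genuine obstacle here: the whole content of the lemma is the bilinearity of the $2\times 2$ determinant, and the only care required is the bookkeeping of the two entry indices $s$ and $t$. The single point worth stating explicitly is that coordinatewise equality over all $2$-subsets of $[n]$ is precisely equality of the two vectors in $\bF_q^{n_2}$, so no further argument is needed to pass from the scalar identities to the vector identities asserted in the lemma.
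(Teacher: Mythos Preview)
Your proof is correct and follows essentially the same approach as the paper: a coordinatewise expansion of the $2\times 2$ minors, regrouping the terms by $\alpha$ and $\beta$, and then noting that the second identity is handled analogously. The only cosmetic difference is that the paper starts from $\alpha\cdot X_{v,u}+\beta\cdot X_{v,w}$ and arrives at $X_{v,\alpha u+\beta w}$, while you go in the opposite direction; your additional remark that the second identity also follows from the antisymmetry $X_{u,v}=-X_{v,u}$ is a nice observation but not needed.
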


\begin{proof}
Consider the $\{s,t\}$ coordinate of the vector $\alpha \cdot X_{v,u}+\beta\cdot X_{v,w}$:
\begin{eqnarray*}
\left(\alpha \cdot X_{v,u}+\beta\cdot X_{v,w}\right)_{\{s,t\}} &=& \alpha\left(v_s u_t - v_t u_s\right) +\beta \left( v_s w_t - v_t w_s\right)\\
&=& v_s\left(\alpha u_t + \beta w_t\right)-v_t\left(\alpha u_s + \beta w_s\right)\\
&=& v_s \left(\alpha u + \beta w\right)_t - v_t\left( \alpha u + \beta w\right)_s\\
&=& \left(\varphi {v \choose \alpha u +\beta w}\right)_{\{s,t\}}\\
&=& \left(X_{v,\alpha u+\beta w}\right)_{\{s,t\}}.
\end{eqnarray*}
The claim that $X_{\alpha u+\beta w,v} = \alpha \cdot X_{u,v}+\beta\cdot X_{w,v}$ has a similar proof.
\end{proof}

\begin{corollary}\label{corollary:basisOfCodewords}
If $V\in\grsmn{q}{n}{1}$ and $v\in V\setminus \{0\}$ then $P_V = \Span{\{X_{v,z^1},\ldots,X_{v,z^{n-1}}\}}$, where $z^i,i\in[n-1]$ was defined in Lemma \ref{lemma:PluckerAreVS}.
\end{corollary}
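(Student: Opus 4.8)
The plan is to reduce the claim to Lemma~\ref{lemma:PluckerAreVS}, which already establishes that $P_V = \Span{\{p^1,\ldots,p^{n-1}\}}$ where $p^i$ is the Pl\"ucker vector of $Z_i=\Span{\{v,z^i\}}$, together with Lemma~\ref{lemma:PluckerIsLinear}, which lets me identify the variant map $\varphi$ with the embedding $P$ on the relevant matrices. The two obstacles to a direct identification are (i) that Lemma~\ref{lemma:PluckerAreVS} is stated for a \emph{specific} nonzero vector $v\in V$ (chosen with an index $r$ for which $v_r\ne 0$), whereas the corollary asserts the conclusion for \emph{any} $v\in V\setminus\{0\}$; and (ii) that $P$ returns an equivalence class in $\bP_q^{n_2-1}$ while $\varphi$ returns an honest vector in $\bF_q^{n_2}$. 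Neither is a serious difficulty.

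First I would observe that by the definition of $\varphi$ and of the Pl\"ucker embedding, for each $i\in[n-1]$ the vector $X_{v,z^i}=\varphi\binom{v}{z^i}$ is precisely the coordinate vector $p^i$ whose entries are $p^i_{\{s,t\}}=v_sz^i_t-v_tz^i_s$ appearing in equation~(\ref{eqn:DefnPi}). Hence $X_{v,z^i}$ is a nonzero representative of the projective point $P(Z_i)$, and consequently
\[
\Span{\{X_{v,z^1},\ldots,X_{v,z^{n-1}}\}}=\Span{\{p^1,\ldots,p^{n-1}\}}=P_V,
\]
where the last equality is exactly the conclusion of Lemma~\ref{lemma:PluckerAreVS}. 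This handles the specific vector $v$ used there.

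To remove the dependence on the particular choice of $v$, I would argue that the span on the left is independent of which nonzero $v\in V$ is chosen. If $v'=\lambda v$ for some $\lambda\in\bF_q\setminus\{0\}$, then by Lemma~\ref{lemma:PluckerIsLinear} (scaling in the first argument) we have $X_{v',z^i}=\lambda\,X_{v,z^i}$ for every $i$, so the spanning sets differ only by a global nonzero scalar on each generator and therefore span the same subspace. Since $V$ is a $1$-subspace, every nonzero $v'\in V$ is of this form, so the span is well-defined as a function of $V$ alone and equals $P_V$. The main (and only mildly delicate) point to be careful about is the passage between projective points and their representatives: one must note that $X_{v,z^i}\ne 0$ for each $i$ (equivalently that $\{v,z^i\}$ is linearly independent, which holds because $v_r\ne 0$ while $z^i_r=0$ by the choice of $\cB_v$), so that each $X_{v,z^i}$ is a genuine nonzero representative of $P(Z_i)$ rather than the zero vector. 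Once this is noted, the identification $\Span{\{X_{v,z^1},\ldots,X_{v,z^{n-1}}\}}=P_V$ follows immediately, completing the proof.
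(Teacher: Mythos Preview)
Your approach is correct and matches the paper's intent; the paper gives no explicit proof and treats the corollary as an immediate restatement, in the notation $X_{v,z^i}=\varphi\binom{v}{z^i}$, of the equality $P_V=\Span{\{p^1,\ldots,p^{n-1}\}}$ established inside the proof of Lemma~\ref{lemma:PluckerAreVS}. Your key observation that $X_{v,z^i}$ coincides coordinatewise with the vector $p^i$ of equation~(\ref{eqn:DefnPi}) is exactly what is needed. One small remark: your concern~(i) about Lemma~\ref{lemma:PluckerAreVS} fixing a specific $v$ is slightly overstated, since that proof already begins with ``Let $v$ be an arbitrary nonzero vector in $V$''; hence your scaling argument via Lemma~\ref{lemma:PluckerIsLinear}, while correct, is not strictly required.
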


We now show that each nonzero codeword in $\mathbb{M}$
can be written as $M_v$ for some $v\in \bF_q^n$, where
$rank(M_v)=n-1$. Hence, the linearity of the code implies
that the rank of the difference between any two matrices
in the code is $n-1$, and therefore the code is equidistant.

\begin{lemma} \label{lemma:codeMrepresentation}
$\mathbb{M}=\left\{ M_v \vert v\in \bF_q^n \right\}$.
\end{lemma}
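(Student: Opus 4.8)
The plan is to prove the lemma by establishing that the map $v\mapsto M_v$ is $\bF_q$-linear, after which both set inclusions fall out immediately. The whole statement is really a linearity bookkeeping argument built on top of Lemma~\ref{lemma:PluckerIsLinear}, so the work is in identifying the correct linearity to invoke and then reading off the two containments.

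First I would observe that each row $X_{v,e_i}$ of $M_v$ is a linear function of $v$. This is exactly the second linearity statement of Lemma~\ref{lemma:PluckerIsLinear}, namely $X_{\alpha u+\beta w,\,e_i}=\alpha\cdot X_{u,e_i}+\beta\cdot X_{w,e_i}$, applied with the row index $e_i$ held fixed. Since this holds row by row for every $i\in[n]$, stacking the rows gives $M_{\alpha u+\beta w}=\alpha M_u+\beta M_w$, so $v\mapsto M_v$ is $\bF_q$-linear. (In particular this forces $M_0$ to be the zero matrix, which is the natural extension of the definition to $v=0$ and keeps the set $\{M_v\mid v\in\bF_q^n\}$ closed under the correspondence with $\mathbb{M}$, which contains the zero matrix as a subspace.)

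Granting this linearity, the forward inclusion $\{M_v\mid v\in\bF_q^n\}\sus\mathbb{M}$ is immediate: writing $v=\sum_{i\in[n]}v_ie_i$, linearity yields $M_v=\sum_{i\in[n]}v_iM_{e_i}$, which lies in $\Span{\{M_{e_1},\ldots,M_{e_n}\}}=\mathbb{M}$. For the reverse inclusion, any element of $\mathbb{M}$ has the form $\sum_{i\in[n]}a_iM_{e_i}$ with $a_i\in\bF_q$, and the same linearity identifies this with $M_v$ for $v=(a_1,\ldots,a_n)$; hence $\mathbb{M}\sus\{M_v\mid v\in\bF_q^n\}$. Combining the two inclusions gives $\mathbb{M}=\{M_v\mid v\in\bF_q^n\}$.

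I do not expect a genuine obstacle here, as the argument is a direct consequence of Lemma~\ref{lemma:PluckerIsLinear}; the one point deserving care is to apply the \emph{first}-argument linearity of $X_{\cdot,\cdot}$ rather than the second, since it is $v$ that varies across the span while the unit vectors $e_i$ labelling the rows of $M_v$ remain fixed. Everything else is the routine translation of a coordinate-wise linear map into the statement that the linear span of $M_{e_1},\ldots,M_{e_n}$ coincides with the image of the linear map $v\mapsto M_v$.
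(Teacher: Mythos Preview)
Your proof is correct and follows essentially the same approach as the paper: both arguments use the first-argument linearity from Lemma~\ref{lemma:PluckerIsLinear} row by row to identify $\sum_i \alpha_i M_{e_i}$ with $M_v$ for $v=\sum_i \alpha_i e_i$, and then read off the two inclusions. The only difference is packaging---you first isolate the linearity of $v\mapsto M_v$ as a standalone observation, whereas the paper applies the lemma directly inside each inclusion---but the content is identical.
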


\begin{proof}
Let $M = \sum_{i=1}^n \alpha_i M_{e_i} \in \mathbb{M}$, for some $\alpha_i \in \bF_q, i\in [n]$. By Lemma \ref{lemma:PluckerIsLinear}, the $j$th row of $M$ is:
\begin{eqnarray*}
\sum_{i=1}^n \alpha_i X_{e_i,e_j} =  X_{\left(\sum_{i=1}^n \alpha _i e_i\right),e_j}= X_{v,e_j}~,
\end{eqnarray*}
where $v=\sum_{i=1}^n \alpha_i e_i$, and therefore $M=M_v$.
Conversely, let $v = \sum_{i=1}^n \alpha_i e_i$.
The same arguments (in reversed order) shows
that $M_v = \sum_{i=1}^n \alpha_i M_{e_i}$ and hence $M_v \in \mathbb{M}$.
\end{proof}

\begin{lemma}
If $v = \sum_{i=1}^n \alpha_i e_i, ~\alpha_i\in \bF_q,i\in[n]$
is a non-zero vector in $\bF_q^n$ then $rank(M_v)=n-1$.
\end{lemma}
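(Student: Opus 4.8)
The plan is to show that $M_v$, which is the matrix whose $j$th row is $X_{v,e_j}$, has rank exactly $n-1$ by identifying its row space with the subspace $P_V$ studied in Lemma~\ref{lemma:PluckerAreVS}. Let $V = \Span{\{v\}} \in \grsmn{q}{n}{1}$. By Lemma~\ref{lemma:PluckerIsLinear}, each row $X_{v,e_j}$ lies in $P_V$, since $X_{v,e_j}$ is the Pl\"{u}cker image (up to the identification between $\bP_q^{n_2-1}$ and $\cG_q(n_2,1)$) of the $2$-subspace $\Span{\{v,e_j\}}$ whenever $e_j \notin V$, and the span of all these images is precisely $P_V$ by the definition of $P_V$. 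More directly, Corollary~\ref{corollary:basisOfCodewords} already tells us that $P_V = \Span{\{X_{v,z^1},\ldots,X_{v,z^{n-1}}\}}$, where the $z^i$ are the $n-1$ unit vectors excluding $e_r$ (for some coordinate $r$ with $v_r \ne 0$).

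The key observation is that the row space of $M_v$ equals $\Span{\{X_{v,e_1},\ldots,X_{v,e_n}\}}$, which I claim coincides with $P_V$. First I would note the inclusion $\Span{\{X_{v,z^1},\ldots,X_{v,z^{n-1}}\}} \sus \mathrm{row}(M_v)$ is immediate, since each $z^i$ is one of the unit vectors $e_j$. For the reverse inclusion, the remaining row $X_{v,e_r}$ must be shown to lie in the span of the others; this follows because $X_{v,v} = 0$ (the Pl\"{u}cker image of a degenerate matrix vanishes) together with the linearity of Lemma~\ref{lemma:PluckerIsLinear}: writing $v = \sum_{j} v_j e_j$ gives $0 = X_{v,v} = \sum_j v_j X_{v,e_j}$, and since $v_r \ne 0$ we may solve for $X_{v,e_r}$ as a linear combination of the other rows. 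Hence $\mathrm{row}(M_v) = P_V$.

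Having identified the row space of $M_v$ with $P_V$, I would invoke Lemma~\ref{lemma:PluckerAreVS}, which asserts that $P_V \in \cG_q(n_2,n-1)$, i.e. $P_V$ is an $(n-1)$-dimensional subspace of $\bF_q^{n_2}$. Since $\mathrm{rank}(M_v) = \dim \mathrm{row}(M_v) = \dim P_V = n-1$, the claim follows immediately. The main obstacle I anticipate is purely notational: the function $\varphi$ acts on ordered matrices and produces vectors in $\bF_q^{n_2}$, whereas $P_V$ was defined via the projective embedding $P$ and the identification of $\bP_q^{n_2-1}$ with $\cG_q(n_2,1)$; I would need to verify cleanly that $X_{v,e_j} = \varphi{v \choose e_j}$ is the honest (non-projective) representative whose span is the projective point $P(\Span{\{v,e_j\}})$, so that the affine span $\Span{\{X_{v,e_1},\ldots,X_{v,e_n}\}}$ genuinely equals the subspace $P_V = \bigcup_{V \sus U} P(U)$. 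This bookkeeping is routine but must be stated carefully to avoid conflating the projective and vector-space pictures.
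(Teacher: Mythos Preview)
Your proposal is correct and follows essentially the same approach as the paper: identify the row space of $M_v$ with $P_{\Span{v}}$ via Corollary~\ref{corollary:basisOfCodewords}, show the leftover row $X_{v,e_r}$ is a combination of the others using the linearity of Lemma~\ref{lemma:PluckerIsLinear}, and conclude via $\dim P_{\Span{v}} = n-1$ from Lemma~\ref{lemma:PluckerAreVS}. Your derivation of the linear dependence through $X_{v,v}=0$ is a slightly slicker phrasing of exactly the computation the paper carries out explicitly, and your notational caveat about $\varphi$ versus $P$ is sound but, as you anticipate, routine.
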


\begin{proof}
By Corollary~\ref{corollary:basisOfCodewords}, the rows of $M_v$
contain a basis for the codeword $P_{\Span{v}}$ (following the
notations of Theorem~\ref{thm:PluckerConstruction}). Recall
(see Lemma~\ref{lemma:PluckerAreVS}) that a basis for $P_\Span{v}$
is obtained by omitting one vector from the set
$\left\{X_{v,e_1},\ldots,X_{v,e_n}\right\}$. Note that $r$ is
the index of the omitted vector and $\alpha_r\ne 0$
(see Lemma~\ref{lemma:PluckerAreVS}). To complete the proof,
we have to show that the vector $X_{v,e_r}$ lies inside $P_\Span{v}$.
Notice that by Lemma~\ref{lemma:PluckerAreVS}
we have that $\alpha_r \ne 0$. We have that
\begin{eqnarray*}
\sum_{i\in[n]\setminus\{r\}}\alpha_i X_{v,e_i}&=&\varphi {v \choose \sum_{i\in[n]\setminus\{r\}}\alpha_i e_i}\\
&=& \varphi { v \choose v-\alpha_r e_r}\\
&=& \varphi { v \choose -\alpha_r e_r}\\
&=&-\alpha_r \varphi { v \choose e_r}=-\alpha_r X_{v,e_r}.
\end{eqnarray*}
Therefore, $X_{v,e_r} = -\frac{1}{\alpha_r}\sum_{i\in [n]\setminus\{r\}}\alpha_i X_{v,e_i}$. Thus, $\mbox{rowspan}\left(M_v\right) = P_\Span{v}$ and $rank\left(M_v\right)=\dim P_\Span{v}=n-1$.
\end{proof}

\begin{lemma}
If $M_v = M_u$ for $u,v\in\bF_q^n$ then $u=v$.
\end{lemma}

\begin{proof}
If $M_u=M_v$ then $X_{v,e_j}=X_{u,e_j}$ for all $j\in[n]$, which implies
\begin{eqnarray} \label{equation:codeMdim}
\forall j\in[n],\forall \{s,t\} \in {[n] \choose 2},~v_s e_{j,t}-v_t e_{j,s}=u_s e_{j,t}-u_t e_{j,s}.
\end{eqnarray}
In particular, for any $i\in [n]\setminus\{j\}$ we may
choose $s=i,t=j$ and obtain that $v_i = u_i$. Since (\ref{equation:codeMdim})
holds for any $j\in[n]$, we also have $v_j=u_j$ and thus $u=v$.
\end{proof}

\begin{corollary}
$|\bM|=q^n$.
\end{corollary}

\begin{corollary}
$\bM\setminus \{0\}$ is an equidistant constant rank code
over $\Fq$ with matrices of size $n\times {n \choose 2}$, rank $n-1$, rank distance $n-1$, and size $q^n-1$.
\end{corollary}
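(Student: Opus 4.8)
The plan is to assemble the preceding lemmas, since all of the substantive work has already been carried out; the corollary is essentially a bookkeeping statement. First I would record the size: the corollary $|\bM| = q^n$ immediately gives $|\bM \setminus \{0\}| = q^n - 1$, and by construction every element of $\bM$ is an $n \times \binom{n}{2}$ matrix over $\Fq$.

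Next I would establish the constant-rank property. By Lemma~\ref{lemma:codeMrepresentation}, every element of $\bM$ is of the form $M_v$ for some $v \in \Fq^n$, and a nonzero element corresponds to a nonzero $v$ by the injectivity lemma $M_v = M_u \Rightarrow v = u$. The rank lemma then asserts $\mathrm{rank}(M_v) = n-1$ for every nonzero $v$, so every codeword in $\bM \setminus \{0\}$ has rank exactly $n-1$.

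The heart of the argument, which is really just an application of linearity, is the rank distance. For two distinct codewords $M_u, M_v \in \bM \setminus \{0\}$, I would use that each row $X_{v,e_j}$ of $M_v$ is linear in $v$ by Lemma~\ref{lemma:PluckerIsLinear}, which yields $M_u - M_v = M_{u-v}$. Since $M_u \neq M_v$ forces $u \neq v$ (again by injectivity), the vector $u-v$ is nonzero, and the rank lemma gives $d_R(M_u, M_v) = \mathrm{rank}(M_{u-v}) = n-1$. As this value does not depend on the chosen pair, $\bM \setminus \{0\}$ is equidistant with rank distance exactly $n-1$.

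I do not anticipate any genuine obstacle: the corollary is a consequence of the lemmas already in place, and the only point requiring (routine) care is the identity $M_u - M_v = M_{u-v}$, which is exactly the content of Lemma~\ref{lemma:PluckerIsLinear} applied row by row.
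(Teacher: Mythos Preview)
Your proposal is correct and matches the paper's approach exactly: the paper states this corollary without a separate proof, having already outlined the argument (linearity of $\bM$ implies every difference is again some $M_w$ with $w\ne 0$, hence of rank $n-1$) in the paragraph preceding Lemma~\ref{lemma:codeMrepresentation}. Your assembly of the size, constant-rank, and distance claims from the prior lemmas is precisely what the paper intends.
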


There is some similarity between the code $\bM$ and the
Sylvester's type Hadamard matrix of order $2^n$~\cite{McSl}.
This code is a binary linear of dimension $n$ and length $2^n$.
Each codeword, except for the all zeros codeword has weight $2^{n-1}$
and the mutual Hamming distance between any two codewords in $2^{n-1}$.
The analog to the code $\bM$ seems to be obvious.

\section{Recursive Construction of Equidistant Subspace Codes}
\label{sec:recursion}

We have shown in Section \ref{sec:Plucker} a direct
construction of an 1-intersecting code in $\grsmn{q}{n_2}{n-1}$.
In this section, we prove that this code can be constructed recursively,
by using some of the results of Section \ref{section:eqRMC}.

Let $\mathbb{C}_{n-1},\mathbb{C}_{n}$ be the 1-intersecting
codes in $\grsmn{q}{{n-1 \choose 2}}{n-2},\grsmn{q}{{n \choose 2}}{n-1}$,
respectively, as constructed in Section \ref{sec:Plucker}.
We first present a construction of \textit{some} code
$\mathbb{D}\subseteq\grsmn{q}{{n \choose 2}}{n-1}$ from
$\mathbb{C}_{n-1}$ and later prove that $\mathbb{D}=\mathbb{C}_n$.

Let $\hat{v}\in\bF_q^{n-1}\setminus\{0\}$. For the purpose of the construction, let $X_{\hat{v},\hat{e}_i}=\varphi{\hat{v}\choose \hat{e}_i}$ as in Section \ref{section:eqRMC} ($\hat{e}_i$ is a unit vector of length $n-1$), and let $\cB_{\hat{v}} = \left\{\hat{z}^1,\ldots,\hat{z}^{n-2}\right\}$ be the set of $n-2$ unit vectors of length $n-1$ such that $P_\Span{\hat{v}} = \Span{\left\{X_{\hat{v},\hat{z}^1},X_{\hat{v},\hat{z}^2},\ldots,X_{\hat{v},\hat{z}^{n-2}}\right\}}$, as denoted in the proof of Lemma \ref{lemma:PluckerAreVS}. For $v\in\bF_q^n\setminus\{0\}$ we define $e_i,X_{v,e_i}$ and $\cB_v$ similarly.

For each codeword $P_\Span{\hat{v}} \in \mathbb{C}_{n-1}$ we construct $q$ codewords in $\mathbb{D}$, denoted by $\left\{U_{\hat{v},a}\right\}_{a\in\bF_q}$, as follows:
\begin{eqnarray*}
U_{\hat{v},0} &= & \mbox{rowspan}
\begin{pmatrix}
\hat{v} & \textbf{0} \\
\textbf{0} & X_{\hat{v},\hat{z}^1}\\
\vdots & \vdots \\
\textbf{0} & X_{\hat{v},\hat{z}^{n-2}}
\end{pmatrix} \\
\forall a \ne 0, U_{\hat{v},a} &= & \mbox{rowspan}
\begin{pmatrix} [ccc|c]
  &               &  & a\cdot X_{\hat{v},\hat{e}_1}\\
  & I_{(n-1)\times (n-1)} &  & \vdots\\
  &               &  & a\cdot X_{\hat{v},\hat{e}_{n-1}} ~.
\end{pmatrix}
\end{eqnarray*}
In addition, we add a codeword $U_0 = \mbox{rowspan}\left(I_{(n-1)\times (n-1)}\vert \textbf{0}\right)$ to $\bD$.

\begin{theorem}
$\bD = \mathbb{C}_{n}$.
\end{theorem}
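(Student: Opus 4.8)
The plan is to show the two codes $\bD$ and $\mathbb{C}_n$ coincide as subsets of $\grsmn{q}{n_2}{n-1}$. Since both are indexed by $1$-subspaces (equivalently nonzero vectors up to scaling), I would establish a bijection between the codewords of $\bD$ and those of $\mathbb{C}_n$, and then verify that corresponding codewords are \emph{equal} as subspaces of $\bF_q^{n_2}$. First I would set up coordinates carefully: the coordinates of $\bF_q^{n_2}$ are indexed by $2$-subsets $\{s,t\}\sus[n]$, and I would split these into those contained in $[n-1]$ (there are $\binom{n-1}{2}$ of them, matching the ambient space of $\mathbb{C}_{n-1}$) and those containing the index $n$ (there are $n-1$ of them). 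This splitting is exactly what the block-matrix structure in the definition of $U_{\hat v,a}$ reflects: the left block lives in the $\binom{n-1}{2}$-coordinates and the right block in the $(n-1)$-coordinates involving $n$.

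The core identification I would carry out is the following. Each codeword of $\mathbb{C}_n$ is $P_{\Span v}$ for some $v\in\bF_q^n\setminus\{0\}$, and by Corollary~\ref{corollary:basisOfCodewords} it equals $\Span{\{X_{v,z^1},\ldots,X_{v,z^{n-1}}\}}$. I would write $v=(\hat v,\ga)$ with $\hat v\in\bF_q^{n-1}$ and $\ga\in\bF_q$, and compute $X_{v,e_j}$ coordinatewise using \eqref{eqn:DefnPi}. The key observation is that for a $2$-subset $\{s,t\}\sus[n-1]$ the value $v_s z_t - v_t z_s$ depends only on $\hat v$ and the $[n-1]$-part of the second vector, so it reproduces the ``$\mathbb{C}_{n-1}$-part'' $X_{\hat v,\hat e_j}$; whereas the coordinates $\{s,n\}$ pick out a scalar multiple of $v_s$. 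I would then match these computed vectors against the explicit rows of $U_{\hat v,a}$ and $U_0$, choosing the correspondence $a=\ga$ (and $\hat v$ proportional to the first $n-1$ entries), treating the case $\ga=0$ (which should map to the $U_{\hat v,0}$ codewords, whose support is confined to the $[n-1]$-block plus one extra vector) and the case where $v\in\Span{e_n}$ (which should map to $U_0$) separately.

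The steps in order: (1) fix the coordinate splitting and record how $\varphi$ of a $2\times n$ matrix decomposes; (2) for $v=(\hat v,\ga)$ with $\ga\ne 0$, show $P_{\Span v}=U_{\hat v/\text{(scaling)},\,\ga}$ by exhibiting that the generating rows of $U_{\hat v,a}$ span exactly $\{X_{v,z^i}\}$ — here I expect to use the linearity of $\varphi$ from Lemma~\ref{lemma:PluckerIsLinear} and the identity $X_{v,e_r}=-\tfrac1{\ga_r}\sum_{i\ne r}\ga_i X_{v,e_i}$ already proved; (3) for $\ga=0$, i.e. $v=(\hat v,0)$ with $\hat v\ne 0$, identify $P_{\Span v}$ with $U_{\hat v,0}$, noting that the first row $(\hat v,\mathbf 0)$ accounts for the new coordinates; (4) for $\hat v=0$ (so $v\in\Span{e_n}$), identify $P_{\Span v}$ with $U_0$; (5) conclude that $\bD$ and $\mathbb{C}_n$ have the same codewords and hence are equal, also checking that the index count $1+q\cdot|\mathbb{C}_{n-1}|$ matches $|\mathbb{C}_n|=\qbin{n}{1}{q}$ via $\qbin{n}{1}{q}=1+q\qbin{n-1}{1}{q}$.

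The main obstacle I anticipate is step (2): verifying that the row span of the block matrix defining $U_{\hat v,a}$ — whose left block is the full identity $I_{(n-1)\times(n-1)}$ rather than the Pl\"ucker vectors $X_{\hat v,\hat z^i}$ — actually coincides with $P_{\Span{(\hat v,a)}}$. The subtlety is that the $a\ne 0$ codewords are presented in a ``systematic'' form in the $[n-1]$-coordinates, so I must show that projecting $P_{\Span{(\hat v,a)}}$ onto those coordinates is surjective (giving full rank $n-1$ there, consistent with $\dim=n-1$), and that the induced map to the remaining $\{s,n\}$-coordinates is precisely multiplication sending $e_j\mapsto a\cdot X_{\hat v,\hat e_j}$. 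Establishing this requires care in tracking how the determinant coordinates $v_s z_t-v_t z_s$ behave when one runs over a basis of the second vector, and in confirming the scaling factor $a=\ga$ is the correct one; once this linear-algebraic bookkeeping is done, the equidistance and size follow immediately from the already-proved properties of $\mathbb{C}_n$.
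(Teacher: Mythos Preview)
Your approach is essentially the paper's: show every $P_{\Span{v}}\in\mathbb{C}_n$ equals some codeword of $\bD$ by case analysis on $v$, then invoke the count $|\bD|\le q\,|\mathbb{C}_{n-1}|+1=\qbin{n}{1}{q}=|\mathbb{C}_n|$ to conclude equality. The three cases you list are exactly the paper's three cases.

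Two bookkeeping corrections are needed before the computation goes through. First, you have the block sizes reversed: in the definition of $U_{\hat v,a}$ the left block $I_{(n-1)\times(n-1)}$ occupies $n-1$ columns and the right block (whose rows are $a\cdot X_{\hat v,\hat e_i}\in\bF_q^{\binom{n-1}{2}}$) occupies $\binom{n-1}{2}$ columns, not the other way around. Consistently with this, the paper takes the \emph{first} coordinate of $v$ as the distinguished scalar, writing $v=(a,\hat u)$, so the $n-1$ ``short'' coordinates are the pairs $\{1,t\}$ rather than $\{s,n\}$; with your choice of the last coordinate you would have to swap the two blocks. Second, the scaling is not $a=\ga$: computing $X_{v,e_i}$ for $i\ge 2$ with $v=(a,\hat u)$ gives the vector $(a\,\hat e_{i-1},\,X_{\hat u,\hat e_{i-1}})$, so the row span equals that of $(I_{n-1}\mid a^{-1}M_{\hat u})$ and hence $P_{\Span v}=U_{\hat u,\,a^{-1}}$. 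With these fixes your plan coincides with the paper's proof.
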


\begin{proof}
We prove that any $P_\Span{v} \in \mathbb{C}_n$ is equal
to some codeword in $\mathbb{D}$. The equality, $\mathbb{D}=\mathbb{C}_n$, will follow
since $|\mathbb{D}|\le |\mathbb{C}_{n-1}|\cdot q+1=\qbin{n-1}{1}{q}\cdot q+1=\qbin{n}{1}{q} = |\mathbb{C}_n|$.
Let $P_\Span{v} \in \mathbb{C}_n$ for $v\in\bF_q^n\setminus\{0\}$, and let $v=(a,\hat{u})\ne(0,\textbf{0})$,
where $a\in\bF_q,\hat{u}\in\bF_q^{n-1}$. To find the related
codeword in $\mathbb{D}$, we distinguish between the following three cases:

\noindent
{\bf Case 1.}
$a=0$. W.l.o.g we choose $\cB_v$ such that $z^1=e_1$, where $e_1$ is a unit
vector of length $n$. We have that $X_{v,z^1}=(-\hat{u},\textbf{0})$,
where $\textbf{0}$ is the all zeros vector of length ${n-1 \choose 2}$;
for all $z^i\in\cB_v, 2\le i\le n-1$ we have that
$X_{v,z^i}=(\textbf{0},X_{\hat{u},\hat{z}^i})$, where
$\hat{z}^i$ is the $(n-1)$-suffix of $\hat{z}^i$ and $\textbf{0}$
is the all zeros vector of length $n-1$. Hence,
Corollary~\ref{corollary:basisOfCodewords} implies
that $P_\Span{v} = \Span{\left\{X_{v,z^1},\ldots,X_{v,z^{n-1}}\right\}} = U_{\hat{u},0}$.

\noindent
{\bf Case 2.}
$a\ne 0, u\ne \textbf{0}$. For $i\ge 2$ we have:
\[
X_{v,e_i} = \left(\underbrace{0,\cdots,0,a,0,\cdots,0}_{\text{$n-1$ entries, $(i-1)$th equals $a$}},\hspace{0.7cm} \underbrace{\hspace{0.7cm} X_{\hat{u},\hat{e}_{i-1}} \hspace{0.7cm}} _{\text{${n-1 \choose 2}$ entries}} \right),
\]
where $\hat{e}_{i-1}$ is the $(n-1)$-suffix of the unit vector $e_i\in\bF_q^n$. Since $a\ne 0$ we may choose $\cB_v = \left\{e_2,\ldots,e_n\right\}$ and obtain by Corollary \ref{corollary:basisOfCodewords} that $P_\Span{v} = \Span{\left\{X_{v,e_2},\ldots,X_{v,e_n}\right\}} = U_{u,a^{-1}}$.

\noindent
{\bf Case 3.}
$a\ne 0,u=\textbf{0}$. For $i\ge 2$ we have:
\[
X_{v,e_i} = \left(\underbrace{0,\cdots,0,a,0,\cdots,0}_{\text{$n-1$ entries, $(i-1)$th equals $a$}},\textbf{0}\right).
\]
We may similarly choose $\cB_v = \left\{e_2,\ldots,e_n\right\}$ and obtain $P_\Span{v} = U_0$.
\end{proof}

Starting from an 1-intersecting code in $\cG_q(3,2)$
which consists of all the two-dimensional subspaces of
$\F_q^3$ we can obtain all the codes constructed in
Section~\ref{sec:construction} recursively. We note
that the initial condition consists exactly of all the
lines of the projective plane of order $q$.

\section{Conclusion and Problems for Future Research}
\label{sec:conclude}

We have made a discussion on the size of the largest
$t$-intersecting equidistant codes.
The largest codes are known to be the trivial sunflowers.
We discussed
trivial codes and surveyed the known results in this
direction. A construction
of non-sunflower 1-intersecting codes in $\cG_q(n,k)$,
$n \geq \binom{k+1}{2}$, whose size is $\sbinomq{k+1}{1}$,
based on the Pl\"{u}cker embedding, is given.
We showed that in at least one case there are larger
non-sunflower 1-intersecting equidistant codes.
Many important and usually very difficult problems
remained for future research. We list herein a few.

\begin{enumerate}
\item Find the size of the largest partial spread for
any given set of parameters.

\item Prove (or disprove) that the size
of a non-sunflower $t$-intersecting
constant dimension code of dimension $k$, where $t > 1$ and $k > t+1$,
is at most $\sbinomq{k+1}{1}$.

\item Identify the cases for which the size
of a non-sunflower $t$-intersecting
constant dimension code of dimension~$k$, where $t > 1$ and $k > t+1$,
is less than $\sbinomq{k+1}{1}$.

\item Identify the cases for which the size
of a non-sunflower 1-intersecting
constant dimension code of dimension $k$, where $k > 2$,
is greater than $\sbinomq{k+1}{1}$.

\item Find new constructions for non-sunflower $t$-intersecting
constant dimension codes of dimension $k$, where $t \geq 1$,
$k > t+1$, whose size is larger than the codes obtained from
partial spreads and their orthogonal codes.

\item Prove or disprove that the size of the largest equidistance
code with subspaces distance $d$ in~$\cP_q(n)$ depends on the size
of the largest equidistance code with subspace distance $d$ in
$\cG_q(n,k)$ for some~$k$.

\item Find 1-intersecting codes in $\cG_q(n,k)$ of size
$\sbinomq{k+1}{1}$, $k > 3$ and $n < \binom{k+1}{2}$.

\item Develop the theory of constant rank codes.

\item Find large equidistant rank metric codes.
\end{enumerate}


\end{document}